\documentclass[a4paper, dvipdfmx,12pt]{amsart}

\textwidth=15cm
\textheight=23cm
\topmargin=0.5cm
\oddsidemargin=0.5cm
\evensidemargin=0.5cm

\usepackage{amsmath,amssymb}
\usepackage{amsthm}

\usepackage{graphicx}
\usepackage{bm}
\usepackage{tikz-cd}
\usepackage{mathtools}
\usepackage{comment}
\usepackage{bigdelim, multirow}
\usepackage{latexsym}
\usepackage{stmaryrd}
\usepackage{braket}
\usepackage{color}
\usepackage{setspace}
\usepackage{comment}
\usepackage{url}
\newcommand{\Z}{\mathbb{Z}}

\newcommand{\R}{\mathbb{R}}

\newcommand{\calB}{\mathcal{B}}

\newcommand{\calI}{\mathcal{I}}

\newcommand\vphi{\varphi}

\newcommand\Ann{\mathop{\mathrm{Ann}}\nolimits}

\newcommand\Ker{\mathop{\mathrm{Ker}}\nolimits}

\newcommand\rank{\mathop{\mathrm{rank}}\nolimits}
\def\fr<#1/#2>{\frac{#1} {#2}}

\newcommand{\aaa}{{\bm a}}
\newcommand{\xx}{{\bm x}}

\newcommand{\SLP}{\mathrm{SLP}}
\newcommand{\HRR}{\mathrm{HRR}}


\newtheorem{thm}{Dont use this}[section]
\newtheorem{theorem}[thm]{Theorem}

\newtheorem{corollary}[thm]{Corollary}
\newtheorem{lemma}[thm]{Lemma}

\newtheorem{question}[thm]{Question}

\newtheorem*{definition*}{Definition}
\newtheorem*{theorem*}{Theorem}
\newtheorem*{proposition*}{Proposition}
\newtheorem*{corollary*}{Corollary}
\newtheorem*{lemma*}{Lemma}
\newtheorem*{problem*}{Problem}
\newtheorem*{question*}{Question}
\newtheorem*{conjecture*}{Conjecture}

\newtheorem{definition and lemma}[thm]{Definition $\&$ Lemma}
\newtheorem{definition and proposition}[thm]{Definition $\&$ Proposition}
\newtheorem{definition and corollary}[thm]{Definition $\&$ Corollary}
\newtheorem*{remark*}{Remark}
\theoremstyle{definition}
\newtheorem{definition}[thm]{Definition}
\newtheorem{example}[thm]{Example}

\newtheorem{remark}[thm]{Remark}

\theoremstyle{definition}

\theoremstyle{plain}
\title[Strictness of the log-concavity of matroids]{%
Strictness of the log-concavity of\\ generating polynomials of matroids}
\author[S. Murai]{Satoshi Murai}
\address[Satoshi Murai]{Department of Mathematics Faculty of Education Waseda University, 1-6-1 Nishi-Waseda, Shinjuku, Tokyo 169-8050, Japan}
\email{s-murai@waseda.jp}
\author[T. Nagaoka]{Takahiro Nagaoka}
\address[Takahiro Nagaoka]{Department of Mathematics, Graduate School of Science, 	Kyoto University, Kyoto, 606-8522, Japan}
\email{tnagaoka@math.kyoto-u.ac.jp}
\author[A. Yazawa]{Akiko Yazawa}
\address[Akiko Yazawa]{Department of Science and Technology,
	Graduate School of Medicine, Science and Technology,
	Shinshu University,
	Matsumoto, Nagano, 390-8621, Japan}
\email{yazawa@math.shinshu-u.ac.jp}

\subjclass[2010]{05C31, 05B35, 13E10
}
\date{}
\keywords{matroid, independent set, Mason's conjecture, Lorentzian polynomial, Hodge--Riemann relation, morphism of matroids}


\begin{document}
\pagestyle{plain}
\begin{abstract}
Recently, it was proved by
Anari--Oveis Gharan--Vinzant,
Anari--Liu--Oveis Gharan--Vinzant and Br\"{a}nd\'{e}n--Huh
that, for any matroid $M$, its basis generating polynomial and its independent set generating
polynomial are log-concave on the positive orthant. Using these, they obtain some combinatorial inequalities on matroids including a solution of strong Mason's conjecture.
In this paper, we study the strictness of the log-concavity of these polynomials and determine when equality holds in these combinatorial inequalities.
We also consider a generalization of our result to morphisms of matroids.
\end{abstract} 
\maketitle
\setcounter{tocdepth}{1}


\section{Introduction}\label{sec:Intro}

Given a matroid $M$ on $[n]=\{1,2,\dots,n\}$ of rank $r$,
one can associate two important polynomials
called the basis generating polynomial
and the independent set generating polynomial.
The \textit{basis generating polynomial} of $M$ is the polynomial
$$f_M= \sum_{B \in \mathcal B(M)} \left( \prod_{i \in B} x_i \right) \in \mathbb Z[x_1,\dots,x_n],$$
where $\mathcal B(M)$ is the set of bases of $M$.
The \textit{independent set generating polynomial} of $M$ is the polynomial
$$P_M=\sum_{I \in \mathcal I(M)} \left( \prod_{i \in I} x_i \right) x_0^{n-|I|} \in \mathbb Z[x_0,x_1,\dots,x_n],$$
where $\mathcal I(M)$ is the set of independent sets of $M$
and where $|X|$ denotes the cardinality of a finite set $X$.
It is also useful to consider the polynomial $\overline P_M:=(\frac {\partial} {\partial x_0})^{n-r} P_M$, which we call the \textit{reduced independent set generating polynomial} of $M$.

These polynomials catch interest of many researchers recently and have been actively studied from combinatorial and algebraic point of view. See e.g.\ \cite{AOV,ALOVI,ALOV,BH1,BH2,COSW,EH,MN,NY,Ya}.
Let $H_f=(\frac \partial {\partial x_i} \frac \partial {\partial x_j} f )$ be the Hessian matrix of a polynomial $f$.
It was proved by Anari--Oveis Gharan--Vinzant \cite{AOV},
Anari--Liu--Oveis Gharan--Vinzant \cite{ALOVI,ALOV} and Br\"{a}nd\'{e}n--Huh \cite{BH1,BH2}
that $f_M$, $P_M$ and $\overline P_M$ are log-concave on the positive orthant,
equivalently, the Hessian matrix $H_{f_M}|_{\xx=\aaa}$ (resp.\ $H_{P_M}|_{\xx=\aaa}$ and $H_{\overline P_M}|_{\xx=\aaa}$) has exactly one positive eigenvalue
for any $\aaa \in \R^n_{>0}$ (resp.\ $\aaa \in \R^{n+1}_{>0}$).
The log-concavity of these polynomials has important applications to combinatorial properties of matroid.
Let $M$ be a matroid on $[n]$ of rank $r \geq 2$.
We write $\calB_i(M):=\{B\in\calB(M) \mid i\in B\}$ and $\calB_{ij}(M):=\{B\in\calB(M) \mid \{i,j\} \subset B\}$.
Also, we write $I_k(M)$ for the number of independent sets of size $k$ of $M$ and $\widetilde I_k(M)=I_k(M)/{n \choose k}$.
The log-concavity of $f_M$ and $\overline P_M$ is known to imply the following combinatorial inequalities.
\begin{itemize}
\item[($*$)]
$|\calB(M)| \times |\calB_{ij}(M)|
\leq 2 (1-\frac 1 r) |\calB_i(M)| \times | \calB_{j}(M)|$ for all $i,j\in [n]$;
\item[($**$)]
$\widetilde I_{k-1}(M) \times\widetilde I_{k+1}(M) \leq \big(\widetilde I_k(M)\big)^2$ for all $k \geq 1$.
\end{itemize}
(See \cite[Theorem 5]{HSW} and \cite[Remark 15]{HW} for ($*$) and see \cite[Theorem 1.2]{ALOV} and \cite[Corollary 7]{BH1} for ($**$).)
Note that the latter inequality was known as strong Mason's conjecture.

The purpose of this paper is to study when $f_M$ and $\overline P_M$ are strictly log-concave, and determine when equality holds in $(*)$ and $(**)$.
Our main result is the following.

\begin{theorem}
\label{thm:1-1}
Let $M$ be a simple matroid on $[n]$ of rank $r \geq 2$.
\begin{itemize}
\item[(i)]
The Hessian matrix $H_{f_M}|_{\xx=\aaa}$ has signature $(+,-,\dots,-)$ for any $\aaa \in \mathbb R_{>0}^n$,
in particular, $f_M$ is strictly log-concave on $\R_{>0}^n$.
\item[(ii)]
If $M$ is not a uniform matroid, then $H_{\overline P_M}|_{\xx=\aaa}$ has signature $(+,-,\dots,-)$ for any $\aaa=(a_0,a_1,\dots,a_n) \in \mathbb R^{n+1}$ with $a_0 \geq 0$ and $a_1,\dots,a_n>0$.
\end{itemize}
\end{theorem}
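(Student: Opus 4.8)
The plan is to reduce strict log-concavity to a statement about the rank of the Hessian, combined with the known fact (from \cite{AOV,ALOVI,ALOV,BH1,BH2}) that the Hessian has exactly one positive eigenvalue on the positive orthant. Since a symmetric matrix with exactly one positive eigenvalue has signature $(+,-,\dots,-)$ precisely when it is nonsingular, the entire theorem becomes the assertion that $H_{f_M}|_{\xx=\aaa}$ (resp.\ $H_{\overline P_M}|_{\xx=\aaa}$) is invertible under the stated hypotheses. For part (i), I would first treat the rank-$2$ case by hand: a simple matroid of rank $2$ on $[n]$ has $f_M = \sum_{i<j} x_i x_j$ (all pairs are bases since there are no parallel elements and no loops), whose Hessian is $J - I$ where $J$ is the all-ones matrix, and this is manifestly nonsingular. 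For the inductive step on $r$, I would use deletion–contraction: the partial derivative $\frac{\partial f_M}{\partial x_i}$ is the basis generating polynomial $f_{M/i}$ of the contraction (up to the variables), and $f_{M\setminus i}$ appears via the relation $f_M = x_i f_{M/i} + f_{M\setminus i}$. The key algebraic input is to interpret the Hessian in terms of a Gorenstein Artinian algebra: $f_M$, viewed as a form of degree $r$, generates (via the apolarity/Macaulay inverse system construction) a Gorenstein algebra $A_{f_M}$, and nondegeneracy of the Hessian at $\aaa$ is equivalent to the map $(A_{f_M})_1 \to (A_{f_M})_{r-1}$ given by multiplication by $\ell_{\aaa}^{r-2}$ (where $\ell_{\aaa} = \sum a_i y_i$) being an isomorphism — i.e.\ to a strong-Lefschetz-type property in the relevant degrees.

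The heart of the argument is therefore to establish that the relevant multiplication map is injective. I would do this by a dimension count combined with an irreducibility/connectedness argument: if $\ell_\aaa^{r-2}$ kills some nonzero linear form $\sum c_i y_i$ in $A_{f_M}$, then the polynomial $(\sum c_i \frac{\partial}{\partial x_i}) f_M$ lies in the annihilator, and one shows this forces the matroid to be disconnected or to have parallel elements, contradicting simplicity (and, in the rank $\geq 2$ case, nontriviality). Concretely, I expect to use that for a simple matroid the elements $\frac{\partial f_M}{\partial x_i}$, $i \in [n]$, together span enough of $(A_{f_M})_{r-1}$, which can be checked via the combinatorics of $\calB_i(M)$ and $\calB_{ij}(M)$ — essentially the same inequalities $(*)$ appearing with equality exactly in the degenerate cases that simplicity rules out. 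The passage from "exactly one positive eigenvalue" (known) plus "nonsingular" (to be shown) to signature $(+,-,\dots,-)$ is then immediate.

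For part (ii), the polynomial $\overline P_M$ is the degree-$r$ part (after the $x_0$-derivatives) of the independent set generating polynomial, and its Hessian at $\aaa$ with $a_0 \geq 0$, $a_i > 0$ must be analyzed allowing $a_0 = 0$. When $a_0 > 0$ one can likely deduce nondegeneracy from part (i) by a specialization/limit argument relating $\overline P_M$ to $f_M$ of a cone or a related matroid; the genuinely new case is $a_0 = 0$, where $\overline P_M|_{x_0 = 0}$ becomes (a multiple of) $f_M$ again, so the Hessian acquires a block structure and one must control the off-diagonal $x_0$-derivatives. The hypothesis that $M$ is \emph{not uniform} enters precisely here: for a uniform matroid $U_{r,n}$ every size-$r$ subset is a basis, which makes a certain linear relation among partial derivatives appear and the Hessian degenerate at $a_0 = 0$, whereas non-uniformity (the existence of a dependent $r$-subset) breaks that relation. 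The main obstacle I anticipate is this last point — carefully showing that the single obstruction to nondegeneracy of $H_{\overline P_M}$ at $a_0 = 0$ is exactly uniformity — which will require a delicate analysis of the $\calI_k(M)$ data, presumably again via Lefschetz-property machinery applied to the Gorenstein algebra attached to $\overline P_M$, together with the classification of when the apolar ideal contains a linear form.
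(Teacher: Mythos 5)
Your reduction of the theorem to nonsingularity of the Hessian (given the known one-positive-eigenvalue bound on the positive orthant) is sound and is essentially how the paper proceeds, and your rank-$2$ base case and your identification of uniformity as the obstruction in (ii) are correct. But the heart of the argument is missing, and the mechanism you sketch for it does not work. Nonsingularity of $H_{f_M}|_{\xx=\aaa}$ splits into two independent statements: (a) the forms $\partial_1 f_M,\dots,\partial_n f_M$ are $\R$-linearly independent, so that $\partial_1,\dots,\partial_n$ is a basis of $R^1_{f_M}$; and (b) the pairing $(\xi_1,\xi_2)\mapsto \xi_1\xi_2\ell_\aaa^{r-2}f_M$ is nondegenerate on $R^1_{f_M}$ (the $\SLP_1$). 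Your sentence ``if $\ell_\aaa^{r-2}$ kills some nonzero linear form $\sum c_iy_i$ then $(\sum c_i\partial_i)f_M$ lies in the annihilator'' conflates these two: being annihilated by $\ell_\aaa^{r-2}$ does not make an element zero in the algebra. For (a), ``disconnectedness'' is not the obstruction (a direct sum of simple matroids still satisfies (a)); the kernel of $\ell\mapsto \ell f_M$ is spanned by the differences $\partial_k-\partial_{k'}$ over parallel pairs and the $\partial_k$ over loops, and proving this requires a genuine induction over the lattice of flats of $M$ (the paper's Theorem \ref{thm:LD}, via Lemma \ref{lem:flat}). For (b), you cannot invoke the equality case of $(*)$: that characterization is Corollary \ref{cor:main}, a consequence of the theorem being proved, and moreover equality in $(*)$ is compatible with simplicity (e.g.\ $U_{2,2}$), so it cannot serve as the contradiction. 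The paper obtains (b) for every Lorentzian polynomial by a local-to-global induction on the degree (Lemma \ref{prop:local} and Theorem \ref{thm:LorentzianHRR}); this is the step your proposal leaves blank.

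For part (ii) you correctly isolate $a_0=0$ as the delicate case, but here too no argument is supplied. On the boundary the known results only yield ``at most one positive eigenvalue'' by continuity of eigenvalues, so one must still produce both a positive eigenvalue and nonsingularity; the paper does this by a separate induction on the rank (Theorem \ref{thm:main}), using $\partial_0\overline P_M=\overline P_{TM}$ and $\partial_i\overline P_M=\overline P_{M/i}$ together with the boundary version of the local-to-global lemma (Lemma \ref{prop:local}(ii)), whose hypothesis is verified by an explicit computation of the degree-one socle of $R_{\overline P_M}$. Your suggested ``specialization/limit argument relating $\overline P_M$ to $f_M$ of a cone'' is not developed and not obviously available, since $\overline P_M$ has non-unit coefficients and is not the basis generating polynomial of any matroid. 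Finally, note that non-uniformity is needed for all $\aaa$, not only at $a_0=0$: for $U_{r,n}$ the relation $(-\partial_0+\partial_1+\cdots+\partial_n)\overline P_{U_{r,n}}=0$ makes the Hessian singular everywhere.
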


Recall that, for a matroid $M$ on $[n]$, the \textit{girth} of $M$ is the minimum cardinality of its circuit, equivalently,
$\mathrm{girth}(M)=\min\{k \mid I_k(M) \ne {n \choose k}\}$.
Theorem \ref{thm:1-1} gives the following combinatorial consequences relating ($*$) and $(**)$.

\begin{corollary}
\label{cor:main}
Let $M$ be a (not necessary simple) matroid on $[n]$ of rank $\geq 2$.
\begin{itemize}
\item[(i)]
Let $i, j \in [n]$ be non-loops.
Then $|\calB(M)| \times |\calB_{ij}(M)|
= 2 (1-\frac 1 r) |\calB_i(M)| \times |\calB_{j}(M)|$ if and only if $i$ and $j$ are not parallel and $M$ has exactly two parallel classes.
\item[(ii)]
$\widetilde I_{k-1}(M) \times \widetilde I_{k+1}(M) = \big(\widetilde I_k(M)\big)^2$
if and only if  $k+1 < \mathrm{girth}(M)$.
\end{itemize}
\end{corollary}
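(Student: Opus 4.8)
The plan is to deduce Corollary~\ref{cor:main} from Theorem~\ref{thm:1-1} by the Hodge-index (reverse Cauchy--Schwarz) mechanism, so I would first record the elementary fact underlying it: if $Q$ is a real symmetric matrix with at most one positive eigenvalue and $v^{\top}Qv>0$, then $(v^{\top}Qw)^{2}\ge(v^{\top}Qv)(w^{\top}Qw)$ for every vector $w$, with equality if and only if $w-\tfrac{v^{\top}Qw}{v^{\top}Qv}\,v\in\Ker Q$ (for the equality case one uses that $Q$ is negative semidefinite on the $Q$-orthogonal complement of $v$).

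\textbf{Part (i).} Put $Q:=H_{f_{M}}|_{\xx=\mathbf 1}$. Since $f_{M}$ is multilinear and homogeneous of degree $r$, Euler's identity gives $e_{i}^{\top}Qe_{i}=0$, $e_{i}^{\top}Qe_{j}=|\calB_{ij}(M)|$, $\mathbf 1^{\top}Qe_{i}=(r-1)|\calB_{i}(M)|$ and $\mathbf 1^{\top}Q\mathbf 1=r(r-1)|\calB(M)|$. Applying the Hodge-index inequality with $v=\mathbf 1$ and $w=|\calB_{j}(M)|\,e_{i}+|\calB_{i}(M)|\,e_{j}$ reproduces $(*)$; since $i,j$ are non-loops we have $|\calB_{i}(M)|,|\calB_{j}(M)|>0$, so equality in $(*)$ is equivalent to $w-\lambda\mathbf 1\in\Ker Q$ with $\lambda=\tfrac{2|\calB_{i}(M)||\calB_{j}(M)|}{r|\calB(M)|}>0$. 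To compute $\Ker Q$, write $f_{M}=f_{\overline M}\circ A$, where $\overline M$ is the simplification of $M$ and $A\colon\R^{n}\to\R^{m}$ sends $x_{l}$ to the coordinate labelled by the parallel class of $l$ (loop coordinates being sent to $0$); then $A$ is surjective with kernel spanned by the loop coordinate vectors and the differences $e_{l}-e_{l'}$ of parallel non-loops, and $Q=A^{\top}\bigl(H_{f_{\overline M}}|_{A\mathbf 1}\bigr)A$. As $A\mathbf 1$ is a positive vector and $\overline M$ is simple of rank $r\ge 2$, Theorem~\ref{thm:1-1}(i) makes $H_{f_{\overline M}}|_{A\mathbf 1}$ nondegenerate, hence $\Ker Q=\Ker A$. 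Unwinding $w-\lambda\mathbf 1\in\Ker A$: pairing with a non-loop parallel class that contains neither $i$ nor $j$ would force $\lambda=0$, which is impossible; and if $i\parallel j$ then, since $\overline M$ has at least two elements, such a class exists. So $i\not\parallel j$ and $i,j$ exhaust the parallel classes of $M$, i.e.\ $M$ has exactly two parallel classes and they differ for $i$ and $j$; conversely, in that case $r=2$, the bases of $M$ are exactly the pairs with one element from each class, and a direct count gives equality in $(*)$. This proves (i).

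\textbf{Part (ii).} I would first reduce to $M$ simple and non-uniform. If $M$ has a loop or a parallel pair then $\mathrm{girth}(M)\le 2$, and the claimed strict inequality for all relevant $k$ (say $1\le k\le r$) follows because passing from $M$ to its simplification multiplies each $\widetilde I_{k}$ by a binomial correction factor (of the form $\binom{n-1}{k}/\binom{n}{k}$ for a deleted loop, and an analogous factor for a merged parallel pair), and these factors make the ratio $\widetilde I_{k-1}\widetilde I_{k+1}/\widetilde I_{k}^{2}$ strictly smaller; the uniform case is settled by the explicit values $\widetilde I_{k}=1$ for $k\le r$ and $0$ otherwise. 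So assume $M$ simple, not uniform. Next I would recast $(**)$ as a Hodge-index statement. Using $\overline P_{M/S}=\bigl(\prod_{s\in S}\tfrac{\partial}{\partial x_{s}}\bigr)\overline P_{M}$ for independent $S$ and specializing the remaining variables to $1$, the triple $\widetilde I_{k-1}(M),\widetilde I_{k}(M),\widetilde I_{k+1}(M)$ is encoded, up to explicit positive constants, in the $2\times2$ Hessian of the degree-$2$, two-variable Lorentzian polynomial obtained from $\overline P_{M}$ by applying $\bigl(\tfrac{\partial}{\partial x_{0}}\bigr)^{r-k-1}\bigl(\sum_{i\ge1}\tfrac{\partial}{\partial x_{i}}\bigr)^{k-1}$ and then setting $x_{1}=\dots=x_{n}$: non-positivity of its discriminant is exactly $\widetilde I_{k-1}(M)\widetilde I_{k+1}(M)\le\widetilde I_{k}(M)^{2}$, and equality holds iff this $2\times2$ matrix is degenerate. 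Finally, one shows this degeneracy propagates back, via those derivatives and the specialization, to a degeneracy of $H_{\overline P_{M}}$ on the subspace it acts on, which is forbidden by Theorem~\ref{thm:1-1}(ii), \emph{unless} the two test directions $e_{0}$ and $e_{1}+\dots+e_{n}$ collapse; tracking the combinatorics, this collapse happens exactly when $I_{k-1}(M),I_{k}(M),I_{k+1}(M)$ all equal the full binomial coefficients, i.e.\ exactly when $k+1<\mathrm{girth}(M)$. This gives (ii).

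\textbf{Main obstacle.} Part (i) is bookkeeping once $\Ker H_{f_{M}}|_{\mathbf 1}$ is identified. The genuine difficulty is the last step of part (ii): transferring the \emph{nondegeneracy} of $H_{\overline P_{M}}$ from Theorem~\ref{thm:1-1}(ii) into the \emph{strictness} of $(**)$. The operators $\tfrac{\partial}{\partial x_{0}}$ and $\sum_{i\ge1}\tfrac{\partial}{\partial x_{i}}$ do not obviously preserve nondegeneracy of Hessians, so one must control how kernels behave under them — or, equivalently, prove the required strict log-concavity directly for the relevant contractions $M/S$ by an induction on the rank that reapplies Theorem~\ref{thm:1-1} to their simplifications. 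That is where the argument has to do real work; everything else is combinatorial translation.
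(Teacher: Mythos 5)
Your part (i) is correct and is a reasonable variant of the paper's argument: the paper runs the family $\partial_i+t\partial_j$ through the $2\times2$ determinant of Lemma \ref{lem:technical1} and shows the discriminant of the resulting quadratic in $t$ is negative, whereas you identify $\Ker H_{f_M}|_{\mathbf 1}$ exactly as $\Ker A$ via $f_M=f_{\overline M}\circ A$ and Theorem \ref{thm:1-1}(i) and read off the equality case of the reverse Cauchy--Schwarz inequality directly; both routes work.

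Part (ii), however, has two genuine gaps. First, your reduction to the simple case fails for parallel elements: deleting a loop does rescale $\widetilde I_k$ by $\binom{n-1}{k}/\binom{n}{k}$, but merging a parallel class gives no multiplicative relation at all --- one has $I_k(M)=\sum_{J\in\calI(\overline M),\,|J|=k}\prod_{c\in J}|E_c|$, a weighted count, not a binomial rescaling of $I_k(\overline M)$ --- so the loopless non-simple case (girth $2$, where strictness must hold for every $k\geq 1$) is not covered. Second, the step you yourself label the ``main obstacle'' is precisely the substance of the proof, and the mechanism you propose for it is not valid: degeneracy of the $2\times2$ Gram matrix of $\{\partial_0,\sum_{i\geq 1}\partial_i\}$ against the Hodge--Riemann form of the derivative $\partial_0^{\,r-k-1}\overline P_M=\overline P_{T^{r-k-1}M}$ does not ``propagate back'' to a degeneracy of $H_{\overline P_M}$; differentiation replaces the algebra $R_{\overline P_M}$ by a different one, and no such transfer of nondegeneracy holds in general. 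What is actually needed, and what the paper supplies, is (a) the $\HRR_1$ of $\overline P_{M'}$ for the truncation $M'=T^{r-k-1}M$ (an arbitrary matroid of rank $k+1$) with respect to the \emph{boundary} direction $\ell=a_1\partial_1+\cdots+a_n\partial_n$ with zero $\partial_0$-coefficient --- this is Theorem \ref{thm:main}, proved by a separate rank induction via Lemma \ref{prop:local}(ii) and a socle computation, and it is not a formal consequence of Theorem \ref{thm:1-1}(ii) since $M'$ need not be simple and the relevant point has $a_0=0$; and (b) the linear independence of $\partial_0$ and $\ell$ in $R^1_{\overline P_{M'}}$, which holds exactly when $M'\neq U_{k+1,n}$, i.e.\ when $k+1\geq\mathrm{girth}(M)$ (Lemma \ref{technical2}(ii)). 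Given (a) and (b), Lemma \ref{lem:technical1} forces the $2\times2$ determinant to be strictly negative. Your sketch correctly locates the $2\times2$ matrix and the role of the collapse of the two test directions, but establishes neither (a) nor (b).
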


The if part of the above corollary is easy.
Indeed, if $M$ has exactly two parallel classes, then
$r=2$ and $\calB(M)=\{\{x,y\}\mid x\in X,\ y \in Y\}$ for some disjoint sets $X$ and $Y$,
so $|\calB(M)|\times |\calB_{ij}(M)|=|\calB_{i}(M)|\times |\calB_{j}(M)|=|X|\times |Y|$ when $i \in X,$ $j \in Y$.
Also, if $k+1 < \mathrm{girth}(M)$, then
$\widetilde I_{k-1}(M)=\widetilde I_{k}(M)=\widetilde I_{k+1}(M)=1$.
Note also that, if $i$ is a loop of $M$, then $\calB_i(M)=\calB_{ij}(M)=\emptyset.$

The strictness of the log-concavity of $f_M$ was studied by the second and the third author in their previous paper \cite{NY},
where the statement (i) was proved for graphic matroids using the theory of prehomogenous vector spaces.
Our proof in this paper is based on relations between the strong Lefschetz property, the Hodge--Riemann relation, and the Lorentzian property introduced in \cite{BH2}.

This paper is organized as follows: In section 2,
we discuss properties of matroids and their generating polynomials.
In section 3, we discuss relations between the strong Lefschetz property, the Hodge--Riemann relation and the Lorentzian property.
In section 4, we prove our main results. Finally, in section 5, we consider a generalization of Theorem \ref{thm:1-1} to morphism of matroids.

\subsection*{Acknowledgements} The authors wish to express their gratitude to Yasuhide Numata for fruitful discussions. The research of the first author is partially supported by KAKENHI 16K05102, and the research of the second author is partially supported by Grant--in--Aid for JSPS Fellows 19J11207.

\section{Matroids and their generating polynomials}\label{sec:matroids}

We first introduce some notation and terminology on matroids.
We refer the readers to \cite{Ox} for basic properties of matroids.
A {\it matroid} on $[n]$  is an ordered pair $M=([n], \calB(M))$ consisting of finite set $[n]$ and a non-empty collection $\calB(M)$ of subsets of $[n]$ satisfying the following property: 
\begin{itemize}
\item[] If $B_1,B_2 \in \calB(M)$ and $x\in B_1\setminus B_2$, then there is a $y\in B_2\setminus B_1$\\ such that $(B_1\setminus\{x\}) \cup \{y\}\in\calB(M)$.
\end{itemize}
An element of $\calB(M)$ is called a \textit{basis} of $M$
and a subset of a basis of $M$ is called an \textit{independent set} of $M$. We denote by $\calI(M)$ the set of independent sets of $M$. 
It is known that each basis has the same cardinality. The \textit{rank} of a subset $X \subset [n]$ in $M$ is the maximum of the cardinality of independent subsets in $X$ and is denoted by $\rank X$ or $\rank_M X$. For any subset $X\subset [n]$, we define its {\it closure} as $\langle X\rangle:=\Set{i\in[n] \ | \ \rank (X\cup\{i\})=\rank X}$. We call $F\subset [n]$ a {\it flat} of $M$ if $F=\langle F\rangle$.
A subset of $[n]$ which is not an independent set is called a \textit{dependent set} of $M$. 
A minimal dependent set of $M$ is called a \textit{circuit} of $M$.
A circuit having cardinality $k$ is called a $k$-circuit.  
In particular $1$-circuit is called a \textit{loop}. 
We call an element $e$ a {\it coloop} of $M$ if it is contained in each basis of $M$. 
Also, if two elements $e_1$ and $e_2$ form a 2-circuit, then we call $e_1$ and $e_2$ are \textit{parallel}.  
We say that a matroid $M$ is \textit{loopless} (resp.\ \textit{simple}) if it has no loops (resp.\ no loops and no parallel elements). 

\begin{example}
\label{ex:uniform}
Let $\calB$ be the collection of $r$-element subsets of $[n]$, where $r\leq n$. Then $([n], \calB)$ is a matroid of $\rank r$ denoted by $U_{r, n}$. This matroid is called the \textit{uniform matroid} of rank $r$ on an $n$-element set. It is known and easily checked by definition that all rank 2 simple matroids are uniform matroids. 
\end{example}

Let $M=([n], \calB(M))$ be a matroid.
For $e \in [n]$ which is not a loop of $M$, we define the matroid $M/e$ on $[n]\setminus\{e\}$ by $\calB(M/e):=\{B\setminus\{e\} \ | \ e\in B\in\calB(M)\}$, which is called the {\it contraction} of $M$ w.r.t.\ $e$. 
Also, for $X \subset [n]$, we define the matroid $M|_X$ on $[n]\setminus X$ by $\calB(M|_X):=\{B\in\calI(M) \ | \ B\subset X,\ |B|=\rank(X)\}$, which is called 
the \textit{restriction} of $M$ to $X$.
In particular, for $e \in [n]$, we write $M \setminus e=M|_{[n]\setminus \{e\}}$ and call it the \textit{deletion} of $e$ from $M$.


For a matroid $M$ on $[n]$, there is a unique partition $[n]=E_0\sqcup E_1\sqcup\cdots\sqcup E_s$, called the {\em parallel class decomposition}, such that $E_0$ consists of all loops and that $i,j \in [n]$ are parallel if and only if they belongs to the same $E_k$,
where $\sqcup$ denotes a disjoint union.
We call $E_1,\dots,E_s$ {\em parallel classes} of $M$.
Recall that, for a matroid $M=([n],\calB(M))$,
its \textit{simplification} $\overline M$ is the matroid obtained from $M$ by deleting all loops and deleting all but one element in each parallel class in the matroid $M$. We also define the {\it truncation} $TM=([n], \calB(TM))$ by $\calB(TM)=\{ I \in \calI(M) \mid |I|=\rank(M)-1\}$,
and inductively define $T^k M:=T(T^{k-1}M)$ for $k>1$.

Below we write some obvious properties of basis generating polynomials and independent set generating polynomials.
In the rest of this paper, we write $\partial_i=\frac {\partial} {\partial x_i}$.

\begin{lemma}
\label{lem:basicproperty}
Let $M$ be a matroid on $[n]$ of rank $r$. 
\begin{itemize}
\item [(i)] If $i \in [n]$ is a loop, then $\partial_i f_{M}=\partial_i P_{M}=0$.
\item [(ii)] If  $i \in [n]$ is not a loop, then $\partial_i f_{M}=f_{M/i}$ and $\partial_i P_{M}=P_{M/i}$.
\item [(iii)]If $i_1, i_2 \in [n]$ are parallel, then $\partial_{i_1} f_{M} = \partial_{i_2} f_{M}$ and $\partial_{i_1} P_{M} = \partial_{i_2} P_{M}$. Moreover, if $[n]=E_0\sqcup E_1\sqcup\cdots\sqcup E_s$ is the parallel class decomposition,
then 
\[
\textstyle
f_{M}=f_{\overline{M}}\left(\sum_{i\in E_1}{x_i}, \ldots, \sum_{i\in E_s}{x_i}\right)\]
and 
\[
\textstyle P_{M}=x_0^{n-s}{P}_{\overline{M}}\left(x_0, \sum_{i\in E_1}{x_i}, \ldots, \sum_{i\in E_s}{x_i}\right),\]
where $E_0$ is the set of loops and we consider that $\overline M$ is a matroid on $[s]$ such that $i$ corresponds to an element in $E_i$ for $i=1,2,\dots,s$.
\end{itemize}
\end{lemma}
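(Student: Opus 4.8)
The plan is to prove all three items by unwinding the definitions of $f_M$ and $P_M$ and matching up monomials, using only the elementary facts about loops, contractions and parallel elements recorded above (and in \cite{Ox}). For (i), a loop of $M$ lies in no independent set, hence in no basis, so neither $f_M$ nor $P_M$ has a monomial divisible by $x_i$, and therefore $\partial_i f_M=\partial_i P_M=0$. For (ii), since $i$ is not a loop I split the defining sum of $f_M$ (resp.\ $P_M$) according to whether the indexing basis (resp.\ independent set) contains $i$: the terms not containing $i$ are annihilated by $\partial_i$, and on a term containing $i$ the operator $\partial_i$ merely deletes the factor $x_i$. Since $B\mapsto B\setminus\{i\}$ is, by definition of $M/i$, a bijection from $\{B\in\calB(M)\mid i\in B\}$ onto $\calB(M/i)$, this gives $\partial_i f_M=f_{M/i}$; and since an independent set of $M/i$ is precisely a set $J\subseteq[n]\setminus\{i\}$ with $J\cup\{i\}\in\calI(M)$, the map $I\mapsto I\setminus\{i\}$ identifies $\{I\in\calI(M)\mid i\in I\}$ with $\calI(M/i)$, under which the exponent $n-|I|$ of $x_0$ becomes $(n-1)-|I\setminus\{i\}|$, which is the correct exponent for the $(n-1)$-element ground set of $M/i$; hence $\partial_i P_M=P_{M/i}$.

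For (iii), note first that, because $\{i_1,i_2\}$ is a circuit, no basis and no independent set of $M$ contains both $i_1$ and $i_2$; and if $i_1,i_2$ are parallel and $i_1\in I\in\calI(M)$, then $(I\setminus\{i_1\})\cup\{i_2\}\in\calI(M)$, since this set has the same cardinality and the same closure as $I$ (as $i_1$ and $i_2$ have equal closures), and likewise for bases. Hence $B\mapsto (B\setminus\{i_1\})\cup\{i_2\}$ is a bijection from $\{B\in\calB(M)\mid i_1\in B\}$ onto $\{B\in\calB(M)\mid i_2\in B\}$ that preserves the complementary set, $B\setminus\{i_1\}=\big((B\setminus\{i_1\})\cup\{i_2\}\big)\setminus\{i_2\}$; comparing with the intermediate computation in (ii) term by term yields $\partial_{i_1}f_M=\partial_{i_2}f_M$, and the same (cardinality-preserving) bijection on independent sets gives $\partial_{i_1}P_M=\partial_{i_2}P_M$.

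For the ``moreover'' part I will use the standard structural description of independence via the simplification: a subset of $[n]$ is independent in $M$ if and only if it is disjoint from $E_0$, meets each parallel class $E_k$ in at most one element, and the set of indices $k$ that it meets is independent in $\overline{M}$. (This follows from the equal-closure observation above, by replacing each chosen element of $I$ by the fixed representative of its parallel class; alternatively, see \cite{Ox}.) Expanding $f_{\overline{M}}\big(\sum_{i\in E_1}x_i,\dots,\sum_{i\in E_s}x_i\big)=\sum_{K\in\calB(\overline{M})}\prod_{k\in K}\big(\sum_{i\in E_k}x_i\big)$ and comparing coefficients of squarefree monomials on the two sides — disjointness of the $E_k$ ensures each squarefree monomial arises in at most one way — gives the formula for $f_M$. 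The same computation with $P_{\overline{M}}\big(x_0,\sum_{i\in E_1}x_i,\dots,\sum_{i\in E_s}x_i\big)=\sum_{J\in\calI(\overline{M})}\big(\prod_{k\in J}(\sum_{i\in E_k}x_i)\big)x_0^{s-|J|}$, multiplied by $x_0^{n-s}$ so that the exponent of $x_0$ becomes $n-|J|=n-|I|$, gives the formula for $P_M$.

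All of this is routine bookkeeping; the one place where an actual (though standard) matroid fact is needed, rather than mere manipulation of sums, is the structural description of $\calI(M)$ in terms of $\overline{M}$ in the last paragraph — equivalently, the fact that exchanging an element for a parallel one preserves independence — and this is the step I would state most carefully, grounding it in the observation that parallel elements have equal closures.
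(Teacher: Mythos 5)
Your proof is correct; the paper states this lemma without proof, presenting these as ``obvious properties,'' and your argument is exactly the routine monomial-matching verification the authors have in mind. The one substantive matroid fact you isolate --- that replacing an element of an independent set by a parallel one preserves independence, and the resulting transversal description of $\calI(M)$ in terms of $\calI(\overline{M})$ --- is the right thing to single out and is correctly justified.
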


If the Hessian matrix of a polynomial $f \in \mathbb R[x_1,\dots,x_n]$ is non-singular at some $\aaa \in \mathbb R^n$,
then the polynomials $\partial_1 f,\dots,\partial_n f$ must be $\mathbb R$-linearly independent.
In the rest of this section, to prove Theorem \ref{thm:1-1}, we first prove this weaker property.

We need the following combinatorial property of flats of matroids. See \cite[Section 1.4, Exercise 11]{Ox}.

\begin{lemma}
\label{lem:flat}
Let $M$ be a matroid on $[n]$ and $F$ a flat of $M$. If $\{G_1, \ldots, G_\ell\}$ is the set of minimal flats of $M$ that properly contain $F$, then
$[n]\setminus F=\bigsqcup_{i=1}^\ell{(G_i\setminus F)}$.
\end{lemma}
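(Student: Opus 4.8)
The plan is to exhibit, for each element outside $F$, a canonical minimal flat over $F$ containing it, and to show this assignment is well defined; the sets $G_i\setminus F$ then automatically partition $[n]\setminus F$. The only tools needed are the closure operator $\langle-\rangle$ and the rank function.

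First I would record a rigidity fact: if $A$ is a flat and $A\subseteq B$ with $\rank A=\rank B$, then $A=B$. Indeed, a maximal independent subset $I$ of $A$ has $|I|=\rank A=\rank B$, so $I$ is a basis of $B$, whence $B\subseteq\langle I\rangle\subseteq\langle A\rangle=A$. In particular, every flat that properly contains $F$ has rank at least $\rank F+1$.

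Next, fix $e\in[n]\setminus F$ and set $G_e:=\langle F\cup\{e\}\rangle$. Since $F=\langle F\rangle$ and $e\notin F$, adjoining $e$ strictly increases the rank, so $\rank G_e=\rank F+1$. I claim $G_e$ is one of the $G_i$ and is the unique one containing $e$. For minimality: if $G'$ is a flat with $F\subsetneq G'\subseteq G_e$, then $\rank G'\geq\rank F+1=\rank G_e$, and $G'\subseteq G_e$ forces equality of ranks, so $G'=G_e$ by the rigidity fact; thus no flat lies strictly between $F$ and $G_e$. For uniqueness: any flat $G$ with $F\subsetneq G$ and $e\in G$ satisfies $F\cup\{e\}\subseteq G$, hence $G_e\subseteq G$; applying this to a minimal flat $G=G_i$ with $e\in G_i$ and invoking minimality gives $G_i=G_e$.

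Finally I would assemble the conclusion: each $e\in[n]\setminus F$ lies in $G_i\setminus F$ for precisely the index $i$ with $G_i=G_e$, while each $G_i\setminus F$ is trivially contained in $[n]\setminus F$; hence $[n]\setminus F=\bigsqcup_{i=1}^\ell(G_i\setminus F)$. I do not anticipate a genuine obstacle; the steps requiring the most care are verifying $\rank G_e=\rank F+1$ and the rigidity fact for flats, both of which follow directly from the definitions of rank and closure.
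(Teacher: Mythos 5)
Your proof is correct; the paper does not prove this lemma at all but cites it as \cite[Section 1.4, Exercise 11]{Ox}, and your argument (assigning to each $e\notin F$ the flat $\langle F\cup\{e\}\rangle$ of rank $\rank F+1$ and checking it is the unique minimal flat over $F$ containing $e$) is the standard solution to that exercise. The only facts you use beyond the definitions are monotonicity and idempotence of the closure operator and $\rank\langle X\rangle=\rank X$, all of which are routine.
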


Also, we often use the following elementary fact.

\begin{lemma}
Let $n \geq 2$ and $a_0,a_1,\dots,a_n \in \mathbb R$.
If $\sum_{j \ne k} a_j =a_0$ for all $k=1,2,\dots,n$,
then $a_1=a_2=\cdots=a_n= \frac 1 {n-1}a_0$.
\label{lem:linearalgebra}
\end{lemma}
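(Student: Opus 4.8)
The plan is to reduce the whole system of hypotheses to a statement about the single quantity $S := a_1 + a_2 + \cdots + a_n$. For each $k \in \{1,\dots,n\}$ the assumption $\sum_{j \ne k} a_j = a_0$ reads $S - a_k = a_0$, and therefore $a_k = S - a_0$. Since the right-hand side is independent of $k$, this already forces $a_1 = a_2 = \cdots = a_n$; denote this common value by $c$. Summing over $k$ gives $S = nc$, and feeding this back into $c = S - a_0$ yields $c = nc - a_0$, i.e. $a_0 = (n-1)c$. As $n \ge 2$ the factor $n - 1$ is a nonzero integer, so dividing gives $c = \frac{1}{n-1} a_0$, which is the assertion.

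There is no real obstacle here: the only point worth noting is the role of the hypothesis $n \ge 2$, which is precisely what guarantees $n - 1 \ne 0$ so that the final division is legitimate (for $n = 1$ the single hypothesis is the empty sum $0 = a_0$ and carries no information about $a_1$). Equivalently, one could observe that the coefficient matrix of the linear system is $J - I$, where $J$ is the $n \times n$ all-ones matrix, whose eigenvalues are $n - 1$ on the line spanned by $(1,\dots,1)$ and $-1$ with multiplicity $n - 1$; since $n \ge 2$ this matrix is invertible, so the solution is unique, and $(c,\dots,c)$ with $c = a_0/(n-1)$ visibly solves it. I would present the first, elementary argument in the text, since that is all that is needed when the lemma is later invoked (typically after applying Lemma~\ref{lem:flat}) to deduce linear relations among the partial derivatives $\partial_i f_M$ and $\partial_i \overline P_M$.
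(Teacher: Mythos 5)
Your proof is correct. Your primary argument is a direct elimination: writing $S=a_1+\cdots+a_n$, each hypothesis becomes $a_k=S-a_0$, which immediately forces all $a_k$ to coincide, and summing then pins down the common value as $a_0/(n-1)$. The paper instead argues via uniqueness: it observes that the coefficient matrix $J-E$ ($J$ the all-ones matrix, $E$ the identity) is non-singular for $n\ge 2$, so the system has a unique solution, which must be the visibly valid constant vector $(\tfrac{1}{n-1}a_0,\dots,\tfrac{1}{n-1}a_0)$. Your alternative remark about the eigenvalues $n-1$ and $-1$ of $J-E$ is essentially the paper's route (and supplies the justification of non-singularity that the paper leaves implicit). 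The elimination argument is the more self-contained of the two, since it needs no separate verification that $J-E$ is invertible; the paper's version is slightly slicker if one takes that invertibility as known. Your observation about the role of $n\ge 2$ is also exactly right: it is what makes $n-1\ne 0$ in your division, and what makes $J-E$ invertible in the paper's version.
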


\begin{proof}
Let $J$ be the all $1$ matrix of size $n$ and $E$ the identify matrix of size $n$.
Then the matrix $J-E$ is non-singular and $(a_1,\dots,a_n)$ must be the unique solution of the system of linear equations $(J-E) \cdot {}^t(x_1,\dots,x_n)= {}^t (a_0,a_0,\dots,a_0)$.
\end{proof}

The following is the main result of this section.

\begin{theorem}
\label{thm:LD}
Let $M$ be a simple matroid on $[n]$ of rank $r \geq 2$.
\begin{itemize}
\item[(i)]
$\partial_1 f_M,\dots,\partial_n f_M$ are $\mathbb R$-linearly independent.
\item[(ii)]
If $M \ne U_{r,n}$ then $\partial_0 \overline P_M,\partial_1 \overline P_M,\dots,\partial_n \overline P_M$ are $\mathbb R$-linearly independent.
\end{itemize}
\end{theorem}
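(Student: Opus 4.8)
The plan is to prove (i) and (ii) by showing that a linear dependence among the first partials forces $M$ to be a uniform matroid (of rank $2$ in the case of $f_M$, since a simple rank-$2$ matroid is uniform), which is excluded by hypothesis. First I would set up the combinatorics: suppose $\sum_{i=1}^n c_i \partial_i f_M = 0$. By Lemma \ref{lem:basicproperty}(ii), $\partial_i f_M = f_{M/i}$, and each $f_{M/i}$ is a polynomial of degree $r-1$ whose monomials record the bases of $M$ through the element $i$. I would extract the coefficient of a fixed monomial $\prod_{j\in B'} x_j$ where $B'$ is an $(r-1)$-subset of $[n]\setminus\{i\}$: this coefficient is $1$ precisely when $B'\cup\{i\}\in\calB(M)$. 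Thus the dependence relation becomes, for every $(r-1)$-flat $F$ (equivalently, for every independent $(r-1)$-set, after grouping), a condition $\sum_{i\notin F,\ F\cup\{i\}\text{ spanning}} c_i = 0$. The key is to reduce to rank $2$ by an induction on $r$: if $r\geq 3$, pick a flat $F$ and observe that the contractions $M/e$ inherit the relation, so by induction (using that contractions of simple matroids can be simplified without losing non-uniformity in a controlled way) the $c_i$ are forced to agree in a structured pattern; then Lemma \ref{lem:flat} and Lemma \ref{lem:linearalgebra} pin them all down.

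For the base case $r=2$ of part (i): here $M$ is simple of rank $2$, hence $M = U_{2,n}$ by Example \ref{ex:uniform}, and $f_M = \sum_{i<j} x_i x_j$, so $\partial_k f_M = \sum_{j\neq k} x_j$. A relation $\sum_k c_k \partial_k f_M = 0$ gives, reading the coefficient of $x_j$, that $\sum_{k\neq j} c_k = 0$ for all $j$; since there are $n\geq 3$ variables (if $n=2$, $M=U_{2,2}$ and the two partials $x_2, x_1$ are visibly independent), Lemma \ref{lem:linearalgebra} applied with $a_0=0$ forces all $c_k=0$. For the inductive step I would fix a non-loop $e$, contract, and note $\partial_i \partial_e f_M = \partial_i f_{M/e}$; since $M/e$ need not be simple I would pass to its simplification via Lemma \ref{lem:basicproperty}(iii), which replaces sums over parallel classes — the $\R$-linear independence of the partials of $f_{\overline{M/e}}$ then constrains the $c_i$ to be constant on each parallel class of $M/e$ and further related across flats, and running this over all $e$ together with Lemma \ref{lem:flat} yields $c_i$ constant globally; finally a single flat relation with not-all-coefficients-present (which exists exactly because $M$ is not uniform) forces that constant to be $0$.

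Part (ii) is parallel but I would exploit the structure $\overline P_M = (\partial_0)^{n-r} P_M$, so $\partial_0 \overline P_M = (\partial_0)^{n-r+1} P_M$ and $\partial_i \overline P_M = (\partial_0)^{n-r} P_{M/i}$ for $i\geq 1$ by Lemma \ref{lem:basicproperty}(ii). Writing $\overline P_M$ in the variables $x_0,\dots,x_n$, the coefficient of $x_0^{r-1-|S|}\prod_{j\in S} x_j$ (for $S$ independent of size $\leq r-1$) counts, up to a positive combinatorial factor, independent sets; a dependence $c_0 \partial_0 \overline P_M + \sum_{i\geq 1} c_i \partial_i \overline P_M = 0$ then reads off, degree by degree in $x_0$, a hierarchy of linear conditions on $(c_0,c_1,\dots,c_n)$. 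Comparing the top-degree part recovers the rank-$r$ truncation $T^{?}M$ and the analysis of part (i); the extra variable $x_0$ and the coefficient $c_0$ are handled by looking at the lowest-degree conditions, where $c_0$ first enters, and Lemma \ref{lem:linearalgebra} again closes the argument. The hypothesis $M\neq U_{r,n}$ is used exactly at the point where, in the uniform case, all these conditions collapse into the single relation $\sum_{j\neq k} a_j = a_0$ (one dimension of genuine dependence), whereas non-uniformity introduces at least one flat $F$ with $F\cup\{i\}$ dependent for some $i$, breaking the symmetry and killing the last free parameter.

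The main obstacle I anticipate is the bookkeeping in the inductive step: contractions of simple matroids are generally not simple, so one must carefully track how parallel classes of $M/e$ sit inside $[n]\setminus\{e\}$ and verify that "$M$ not uniform" descends appropriately — it is not literally true that all contractions of a non-uniform simple matroid are non-uniform, so the induction must be organized around the existence of a single well-chosen flat $F$ (a rank-$(r-1)$ flat not of the form $[n]\setminus\{i\}$, equivalently a dependent flat-complement) rather than a blanket inductive hypothesis, and arranging the linear-algebra conclusion from the resulting incomplete system of "hyperplane-sum-zero" conditions via Lemmas \ref{lem:flat} and \ref{lem:linearalgebra} is where the real work lies.
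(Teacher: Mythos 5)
The central problem is the role you assign to non-uniformity in part (i). Part (i) carries no hypothesis $M \ne U_{r,n}$ and is asserted (and true) for uniform matroids of every rank $\geq 2$; yet your plan is to show that a dependence "forces $M$ to be uniform, which is excluded by hypothesis," and your inductive step closes by invoking "a single flat relation with not-all-coefficients-present, which exists exactly because $M$ is not uniform." As written this proves nothing for $M=U_{r,n}$ with $r\geq 3$. (The relations $\sum_{j\notin F}c_j=0$ over all rank-$(r-1)$ flats do kill every $c_j$ in the uniform case as well, via Lemma \ref{lem:linearalgebra}, but not by the asymmetry mechanism you describe.) A second, related slip: by Lemma \ref{lem:basicproperty}(iii), linear independence of the partials of $f_{\overline{M/e}}$ forces the \emph{sums} $\sum_{i\in E}c_i$ over the parallel classes $E$ of $M/e$ to vanish, not the $c_i$ to be constant on each class; constancy can only be extracted afterwards by comparing these conditions across different $e$ lying in a common rank-$2$ flat. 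That assembly of the "hyperplane-sum-zero" conditions is precisely the step you yourself flag as "where the real work lies," and it is the step that is missing; a proof that defers its only nontrivial verification is not yet a proof.

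The paper avoids the contraction induction altogether. It fixes the single matroid $M$ and proves, by descending induction on the rank of a flat $F\ne[n]$, that $\sum_{j\in[n]\setminus F}a_j=0$: the base case (rank $r-1$) is exactly your coefficient extraction, and the inductive step uses Lemma \ref{lem:flat} to write $[n]\setminus F=\bigsqcup_{k}(G_k\setminus F)$ over the $\ell\geq 2$ minimal covers $G_k$ of $F$, which yields $\ell\cdot\sum_{j\notin F}a_j=\sum_{j\notin F}a_j$ and hence the vanishing; the rank-$1$ flats (singletons, by simplicity) plus Lemma \ref{lem:linearalgebra} with $a_0=0$ then finish, uniform or not. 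For (ii), after the degree-by-degree comparison in $x_0$ gives $a_1=\cdots=a_n=-a_0$, the paper substitutes $x_1=\cdots=x_n=1$ into the remaining identities to obtain $(n-k+1)I_{k-1}(M)=kI_k(M)$, hence $I_k(M)=\binom{n}{k}$ for all $k\leq r$ and $M=U_{r,n}$. Your proposal never supplies this (or any equivalent) derivation, so the hypothesis $M\ne U_{r,n}$ is never actually brought to bear; "non-uniformity introduces a dependent set breaking the symmetry" needs to be converted into the quantitative statement that $a_0\ne 0$ forces every $k$-subset, $k\leq r$, to be independent.
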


\begin{proof}
(i)
Suppose $(a_1 \partial_1 + \cdots +a_n \partial_n)f_M=0$ for some $a_1,\dots,a_n \in \mathbb R$.
We will prove $a_1= \cdots =a_n=0$.
To do this, we actually prove the following statement using decent induction on the rank of flats.
\begin{align}
\label{2--1}
\sum_{j \in [n] \setminus F} a_j=0 \ \ \ \mbox{ for all flats $F \ne [n]$ of $M$.}
\end{align}
Note that \eqref{2--1} and Lemma \ref{lem:linearalgebra}
imply $a_1=\cdots =a_n=0$ since the equations for rank $1$ flats tell $\sum_{j\ne k} a_j=0$ for all $k=1,2,\dots,n$.

We first prove \eqref{2--1} when $F$ has rank $r-1$.
Let $I \in \calI(M)$ be an independent set such that $\langle I \rangle =F$.
Then $|I|=r-1$ and the coefficient of $\prod_{i \in I} x_i$ in $(a_1\partial_1+ \cdots +a_n \partial_n)f_M$ is
$$\sum_{j \not \in I,\ \!\{j\} \cup I \in \calB(M)} a_j = \sum_{j \in [n] \setminus F}a_j,$$
which must be zero since we assume $(a_1\partial_1+ \cdots +a_n \partial_n)f_M =0
$.

Now suppose $F$ has rank $<r-1$ and assume that \eqref{2--1} holds for all flats  $G$ that properly contain $F$.
Let $G_1,\dots,G_\ell$ be the minimal flats that properly contains $F$.
Note that $\ell \geq 2$ since, by Lemma \ref{lem:flat}, $\ell=1$ implies $G_1=[n]$ and $\rank (F)= \rank([n])-1=r-1$.
Since $[n] \setminus F=\sqcup_{k=1}^\ell (G_k \setminus F)$ by Lemma \ref{lem:flat},
we have
\begin{align*}
\textstyle
\ell \left( \sum_{j \in [n] \setminus F} a_j \right)
& \textstyle
= \sum_{k=1}^\ell \left\{ \left( \sum_{j \in [n] \setminus G_k} a_j \right) + \left( \sum_{j \in G_k \setminus F} a_j \right) \right\}\\
&\textstyle
=\sum_{k=1}^\ell  \left( \sum_{j \in G_k \setminus F} a_j \right)=\sum_{j \in [n] \setminus F} a_j,
\end{align*}
where we use the induction hypothesis to the second equality.
As $\ell \geq 2$, the above equation implies \eqref{2--1} for $F$, as desired.

(ii)
Let $f_k= \sum_{I \in \calI(M),\ |I|=k} (\prod_{i \in I} x_i)$ for $k=0,1,2,\dots,r$, where $f_0=1$.
Then $P_M=x_0^n+x_0^{n-1}f_1+ \cdots +x_0^{n-r}f_r$ and
$$\overline P_M= \frac {n!} {r!} x_0^r + \frac {(n-1)!} {(r-1)!} x_0^{r-1} f_1 + \frac {(n-2)!} {(r-2)!} x_0^{r-2} f_2 + \cdots + (n-r)! f_r.$$
Suppose $(a_0\partial_0+a_1\partial_1+ \cdots +a_n \partial_n)\overline P_M=0$
with $a_0,a_1,\dots,a_n \in \mathbb R$.
We will prove $a_0=a_1=\cdots=a_n=0$ or $M=U_{r,n}$.
Since
\begin{align*}
&(a_0\partial_0+ \cdots +a_n \partial_n)\overline P_M\\
&= \sum_{k=1}^r \frac {(n-k)!} {(r-k)!} \big\{ (n-k+1)a_0 f_{k-1} + (a_1 \partial_1+\cdots+a_n \partial_n) f_k \big\}x_0^{r-k}
\end{align*}
equals to zero,
we have
\begin{align}
\label{2--2}
(n-k+1)a_0 f_{k-1} + (a_1\partial_1+ \cdots+a_n\partial_n) f_k=0
\ \ \mbox{ for }k=1,2,\dots,r.
\end{align}
Since $M$ is simple, $f_1=\sum_{k=1}^n x_k$ and $f_2=\sum_{1 \leq i<j \leq n} x_ix_j$,
so by considering \eqref{2--2} when $k=2$ we have
$$
\sum_{k=1}^n \left\{(n-1)a_0 +{ \sum_{j \ne k} a_j } \right\} x_k=0
\Leftrightarrow
\sum_{j \ne k} a_j = -(n-1)a_0 \ \mbox{ for }k=1,2,\dots,n.$$
This tells $a_1=a_2=\cdots=a_n=-a_0$ by Lemma \ref{lem:linearalgebra}.

If $a_0=0$, then we have $a_0=\cdots=a_n=0$.
Suppose $a_0 \ne 0$.
Then, by substituting $x_1=\cdots=x_n=1$ in \eqref{2--2}, we get
$$a_0 \{ (n-k+1) I_{k-1}(M) -k I_k(M) \}=0$$
for $k=1,2,3,\dots,r$.
This proves
$$I_k(M)= \frac {n-k+1} k I_{k-1}(M) = \cdots = \frac {(n-k+1) (n-k+2)\cdots n} {k!}= {n \choose k}$$
for $k=1,2,\dots,r$,
which tells $M=U_{r,n}$.
\end{proof}

If  $\partial_0 f,\dots,\partial_n f$ are $\mathbb R$-linearly dependent, then so do $\partial_0(\partial_0f),\dots,\partial_n (\partial_0 f)$.
Thus the conclusion of Theorem \ref{thm:LD}(ii) also holds for $P_M$.
Also, for a uniform matroid $U_{r,n}$,
it is easy to see $(-\partial_0+\partial_1+ \cdots+\partial_n)P_{U_{r,n}}=0$,
so the statement (ii) does not hold for uniform matroids.

\section{SLP, HRR and Lorentzian polynomials}\label{sec:HRR}

In this section we discuss relations between the strong Lefschetz property,
the Hodge--Riemann relation, and
Lorentzian polynomials introduced by Br\"{a}nd\'{e}n and Huh \cite{BH2}.
 
\subsection{Lorentzian polynomials}

A polynomial $f \in S$ is said to be \textit{log-concave} (resp.\ \textit{strictly log-concave}) on an open convex set $X \subset \R^n$ if the log of $f$ is a concave (resp.\ strictly concave) function on $X$.
By a well-known criteria for the concavity,
$\log f$ is concave on $X$ if and only if the Hessian matrix of $\log f$ is negative semidefinite at $\xx=\aaa$ for any $\aaa \in X$,
and $\log f$ is strictly concave on $X$ if the Hessian matrix of $\log f$ is negative definite at $\xx=\aaa$ for any $\aaa \in X$.
Note that when $f(\aaa)>0$
the log of $f$ is negative semidefinite (resp.\ negative definite) at $\xx=\aaa$ if and only if $H_f|_{\xx=\aaa}$ has exactly one positive eigenvalue (resp.\ has signature $(+,-,\dots,-)$).
See \cite[Proposition 5.6]{BH2} or \cite[\S 2.3]{NY}.
We simply say that $f$ is log-concave at $\aaa \in \mathbb R^n$ if the Hessian matrix $H_f|_{\xx=\aaa}$ has exactly one positive eigenvalue.

\begin{definition}\label{def:Lorentzian}
Let $f\in\R_{\geq0}[x_1, \ldots, x_n]$ be a homogeneous polynomial of degree $\geq 2$. We call that $f$ is a {\it Lorentzian polynomial} if for any $(k_1, \ldots, k_n)\in\Z_{\geq0}^n$ with $\sum_{i=1}^n k_i\leq \deg f-2$, $\partial_1^{k_1}\cdots\partial_n^{k_n} f$ is identically zero or log-concave at any $\aaa \in \R_{>0}^n$.
\end{definition}

The above property is also known as the strong log-concavity \cite{G}, but we call it Lorentzian since it is equivalent to the Lorentzian property defined in \cite[Definition 2.1]{BH2}.
See \cite[Theorem 5.3]{BH2}.
We note the next observation that follows from the continuity of eigenvalues.

\begin{lemma}
\label{lem:obs}
If $f \in \R_{\geq0}[x_1, \ldots, x_n]$ is Lorentzian,
 then $H_f|_{\xx=\aaa}$ has at most one positive eigenvalue for any $\aaa \in \R_{\geq 0}^n$.
\end{lemma}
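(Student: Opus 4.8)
The plan is to deduce the bound on positive eigenvalues at an arbitrary boundary point $\aaa \in \R_{\geq 0}^n$ from the analogous statement at interior points, where it is guaranteed by the Lorentzian hypothesis together with the degree assumption $\deg f \geq 2$ (taking all $k_i = 0$ in Definition \ref{def:Lorentzian} shows $f$ itself is log-concave at every $\aaa \in \R_{>0}^n$, i.e.\ $H_f|_{\xx=\aaa}$ has at most one positive eigenvalue there). The mechanism is a continuity/limiting argument: pick a sequence $\aaa^{(m)} \in \R_{>0}^n$ with $\aaa^{(m)} \to \aaa$; then $H_f|_{\xx = \aaa^{(m)}} \to H_f|_{\xx = \aaa}$ entrywise, since each entry of the Hessian is a polynomial in the coordinates, hence continuous.

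First I would recall the standard fact that the eigenvalues of a real symmetric matrix depend continuously on the matrix (for instance, because they are the roots of the characteristic polynomial, whose coefficients are polynomials in the matrix entries, and roots of a monic polynomial vary continuously with its coefficients; or by a min--max / Weyl inequality argument). Ordering the eigenvalues $\lambda_1(A) \geq \lambda_2(A) \geq \cdots \geq \lambda_n(A)$, each $\lambda_k$ is a continuous function of $A$. By the Lorentzian hypothesis applied with all $k_i = 0$, for every $m$ we have $\lambda_2\bigl(H_f|_{\xx=\aaa^{(m)}}\bigr) \leq 0$. Letting $m \to \infty$ and using continuity of $\lambda_2$ together with $H_f|_{\xx=\aaa^{(m)}} \to H_f|_{\xx=\aaa}$, we obtain $\lambda_2\bigl(H_f|_{\xx=\aaa}\bigr) \leq 0$, which says precisely that $H_f|_{\xx=\aaa}$ has at most one positive eigenvalue. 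One small point to note: if $\deg f = 2$ the Hessian is constant, and if it happens that $\partial_1^{k_1}\cdots\partial_n^{k_n} f$ with $\sum k_i = \deg f - 2$ is identically zero the relevant Hessian is zero and the conclusion is trivial; the argument above handles all cases uniformly as long as $f$ itself is non-identically-zero, and if $f \equiv 0$ the statement is vacuous.

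There is essentially no real obstacle here; the only thing to be careful about is invoking continuity of eigenvalues correctly (one must order them, as individual eigenvalues are not globally continuous once multiplicities collide, but the ordered eigenvalues $\lambda_1 \geq \cdots \geq \lambda_n$ are), and checking that the Lorentzian definition does indeed cover the zeroth-order derivative so that the hypothesis gives information about $H_f$ itself at interior points. This is why the statement of the lemma only claims \emph{at most one} positive eigenvalue on the closed orthant rather than signature $(+,-,\dots,-)$: strictness can degenerate in the limit, and recovering it on the boundary is exactly the harder problem addressed later in the paper.
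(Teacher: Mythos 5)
Your argument is correct and is exactly the one the paper intends: the lemma is stated there as an observation "that follows from the continuity of eigenvalues," and your proposal simply fills in the details (approximating $\aaa$ by interior points where the Lorentzian hypothesis gives $\lambda_2 \leq 0$, then passing to the limit using continuity of the ordered eigenvalues). No gap; same approach.
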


An important instance of Lorentzian polynomials are generating polynomials of matroids.
Indeed, the following result is proved in 
\cite{AOV,ALOVI,ALOV,BH1,BH2} (see \cite[Theorem 25]{AOV} and \cite[Theorem 4.1]{ALOV}).

\begin{lemma}
For any matroid $M$ of rank $\geq 2$, the polynomials $f_M$ and $P_M$ are Lorentzian.
\end{lemma}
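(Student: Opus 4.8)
My plan is to recall that this lemma is due to \cite{AOV,ALOVI,ALOV,BH1,BH2} and to follow the combinatorial route, via Br\"{a}nd\'{e}n--Huh's characterization of Lorentzian polynomials. That characterization says that a nonzero homogeneous $f\in\R_{\ge0}[x_1,\dots,x_n]$ of degree $d\ge2$ is Lorentzian if and only if the set of exponent vectors occurring in $f$ is \emph{M-convex} --- that is, for any two such vectors $\alpha,\beta$ and any coordinate $i$ with $\alpha_i>\beta_i$ there is a $j$ with $\alpha_j<\beta_j$ and $\alpha-e_i+e_j$ again occurring in $f$ --- and, when $d\ge3$, every partial derivative $\partial_i f$ is Lorentzian (the case $d=2$ being subsumed, since a nonnegative quadratic form with M-convex support automatically has at most one positive eigenvalue) \cite{BH2}. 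I would apply this to $f_M$ and to $P_M$, checking M-convexity of the relevant supports directly and verifying the derivative condition by induction on the degree.

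For $f_M$, whose degree is $r$: the exponent vectors occurring in $f_M$ are exactly the indicator vectors $\mathbf{1}_B$ of bases $B\in\calB(M)$, and M-convexity of this set is precisely the symmetric basis-exchange axiom; if $i$ is a loop then $x_i$ does not occur in $f_M$, but this is harmless, as the support still lies in, and is M-convex inside, a coordinate subspace. For the inductive step, Lemma~\ref{lem:basicproperty}(i),(ii) gives $\partial_i f_M=f_{M/i}$ for $i$ not a loop and $\partial_i f_M=0$ otherwise; since $M/i$ is again a matroid, now of rank $r-1$, the induction closes, bottoming out at rank $2$, where the degree-two case of the criterion applies (concretely, for a simple rank-$2$ matroid $f_M=\sum_{1\le i<j\le n}x_ix_j$ has Hessian $J-I$, with single positive eigenvalue $n-1$ and $n-1$ eigenvalues $-1$). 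This yields that $f_M$ is Lorentzian.

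For $P_M$, whose degree is $n$: the exponent vectors occurring in $P_M$ are the vectors $(n-|I|,\mathbf{1}_I)\in\Z_{\ge0}^{n+1}$ with $I\in\calI(M)$, all of coordinate sum $n$, and I would check their M-convexity from the independence-augmentation axiom --- to lower a coordinate $i\in[n]$ toward another such vector $(n-|I'|,\mathbf{1}_{I'})$ one augments $I$ inside $I'$ if $|I|<|I'|$ and otherwise raises the $x_0$-coordinate, using that $I\setminus i$ is independent; to lower the $x_0$-coordinate one likewise augments $I$ inside $I'$. For the inductive step, $\partial_i P_M=P_{M/i}$ for non-loops $i\in[n]$ by Lemma~\ref{lem:basicproperty}(ii), while $\partial_0 P_M=\sum_{i\in[n]}P_{M\setminus i}$ by a direct count; although this last polynomial is not itself an independent-set generating polynomial, the key observation is that every iterated partial derivative of $P_M$ has set of exponent vectors of the shape $\{(m-|I|,\mathbf{1}_I):I\in\calI(M'),\ |I|\le m\}$ for some minor $M'$ of $M$ and some integer $m\ge0$, and the augmentation argument above shows that every such set is M-convex. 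Hence the criterion applies at every stage of the recursion, which again bottoms out at degree two, and $P_M$ is Lorentzian.

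I expect the genuine content to be the M-convexity of the supports. For $f_M$ this is the basis-exchange axiom stated verbatim, so nothing is hidden. For $P_M$ it is the slightly less transparent fact --- which is exactly what the augmentation computation above verifies --- that the lifted indicator vectors $(n-|I|,\mathbf{1}_I)$ of the independent sets of a matroid form an M-convex set; everything else (identifying the derivatives with generating polynomials of minors via Lemma~\ref{lem:basicproperty}, propagating the shape of the supports through the recursion, and the rank-$\le2$ base case) is routine.
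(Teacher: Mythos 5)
The paper does not prove this lemma at all: it is quoted from \cite{AOV,ALOVI,ALOV,BH1,BH2}, and for good reason --- it is a deep theorem. Your attempt to supply a proof founders on the base case of the recursion. The characterization you invoke from \cite{BH2} is: for $d\geq 3$, $f$ is Lorentzian iff it has nonnegative coefficients, M-convex support, and all $\partial_i f$ are Lorentzian; but for $d=2$ the Hessian condition (at most one positive eigenvalue) is an \emph{extra} hypothesis, not a consequence of M-convexity. Your parenthetical claim that ``a nonnegative quadratic form with M-convex support automatically has at most one positive eigenvalue'' is false: $f=x_1^2+x_1x_2+x_2^2$ has support $\{(2,0),(1,1),(0,2)\}$, which is M-convex, yet its Hessian $\left(\begin{smallmatrix}2&1\\1&2\end{smallmatrix}\right)$ is positive definite. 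Indeed Br\"and\'en--Huh point out explicitly that the inclusion $L_n^2\subset M_n^2$ is strict. So the degree-two leaves of your induction must be checked by hand, and that check is where the entire content of the theorem lives.

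For $f_M$ this is repairable: the degree-two derivatives are $f_{M'}$ for rank-two minors $M'$, whose Hessians are adjacency matrices of complete multipartite graphs (the basis graphs of rank-two loopless matroids), and these classically have exactly one positive eigenvalue --- you even compute the simple case $J-I$, but you present it as an illustration rather than as the needed verification, and you never treat the non-simple case. For $P_M$ the gap is not repairable by your method: after applying $n-2$ partial derivatives one obtains quadratics of the form $c\,x_0^2+x_0\,\ell(x)+q(x)$ with $x_0^2$ and $x_0x_i$ terms present, and M-convexity of their supports says nothing about their signatures (cf.\ the counterexample above). Establishing that these Hessians have one positive eigenvalue is essentially equivalent to the ultra-log-concavity statement $(**)$, i.e.\ to strong Mason's conjecture, which is precisely what \cite{ALOV} and \cite{BH1} prove by genuinely different means (spectral analysis of local walks, resp.\ Hodge--Riemann relations for an auxiliary graded algebra). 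Your M-convexity computations for $\calB(M)$ and for $\{(n-|I|,\mathbf{1}_I): I\in\calI(M)\}$ are correct and are indeed a necessary ingredient, but they are the easy half; the proof as written assumes the hard half.
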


\subsection{The Strong Lefschetz property and the Hodge--Riemann relation}
Lorentzian polynomials are related to algebraic properties called the strong Lefschetz property and the Hodge--Riemann relation.

Let $S=\R[\partial_1,\dots, \partial_n]$ be the polynomial ring whose variables are $\partial_1,\dots,\partial_n$.
For a homogenous polynomial $f \in \R[x_1, \ldots, x_n]$ of degree $d$, we define the $\R$-algebra
\begin{align*}
R^*_f&:=S/\Ann_S(f),
\end{align*}
where $\Ann_S(f)=\{D \in S \mid Df=0\}$. It is well-known that $R^*_f$ is a Poincar\'{e} duality algebra, that is,
$R_f^d \cong \R$ and the bilinear pairing induced by the multiplication
$R^k_f \times R^{d-k}_f \to R_f^d$ is nondegenerate for all $k$ (see e.g.\ \cite[Theorem 2.1]{MW}).
We say that $R^*_f$ (or $f$) has the \textit{strong Lefschetz property} at degree $k \leq d/2$ (shortly $\SLP_k$) w.r.t.\ a linear form $\ell \in S$ if the multiplication map 
\[\times \ell^{d-2k} : R_f^k \to R_f^{d-k}\]
is an isomorphism. We say that $R^*_f$ (or $f$) satisfies the \textit{Hodge--Riemann relation} at degree $k$ (shortly $\HRR_k$) w.r.t.\ 
a linear form $\ell \in S$
if $R_f^*$ has the $\SLP_k$ w.r.t.\ $\ell$ and the bilinear form 
\[Q_{\ell}^{k} : R_f^{k}\times R_f^{k}\to \R,\ (\xi_1,\xi_2) \mapsto (-1)^{k}[\xi_1\ell^{d-2k}\xi_2]\]
is positive definite on the kernel of $\times\ell^{d+1-2k}: R^k_f \to R^{d-2k+1}_f$,
where $[-] : R_f^d \to \R$ is the isomorphism defined by $D \mapsto D(\partial_1,\dots,\partial_n)f$.

We are actually only interested in $\SLP_1$ and $\HRR_1$ in this paper.
For $\aaa=(a_1,\dots,a_n) \in \R^n$,
we write $\ell_\aaa=a_1\partial_1+ \cdots + a_n \partial_n$.

\begin{lemma}
\label{The Hessian criterion}
Let $f \in S$ be a homogeneous polynomial of degree $\geq 2$ and $\aaa \in \R^n$. Assume that $f(\aaa) > 0$. Then,
\begin{enumerate}
\item[(i)] $R_f$ has the $\SLP_1$ w.r.t.\ $\ell_{\aaa}$ $\Leftrightarrow$ $Q_{\ell_\aaa}^1$ is non-singular.
\item[(ii)] $R_f$ has the $\HRR_1$ w.r.t.\ $\ell_{\aaa}$ $\Leftrightarrow$ $-Q_{\ell_\aaa}^1$ has signature $(+,-,\dots,-)$.
\end{enumerate}  
\end{lemma}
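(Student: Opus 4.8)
The plan is to reduce both equivalences to the linear algebra of a symmetric bilinear form on the finite-dimensional vector space $R_f^1$, exploiting that $R_f$ is a Poincar\'e duality algebra with socle degree $d=\deg f$ and that the multiplication pairing $R_f^k\times R_f^{d-k}\to R_f^d\cong\R$ is perfect. Note first that $Q_{\ell_\aaa}^1$ is symmetric (since $R_f$ is commutative), so its signature is well defined. Next I would record the elementary identity $\ell_\aaa^m g=m!\,g(\aaa)$, valid for any homogeneous $g$ of degree $m$, which follows from the Taylor expansion $g(x+t\aaa)=\sum_k\tfrac{t^k}{k!}(\ell_\aaa^kg)(x)$; applied to $g=f$ and to $g=\partial_i\partial_jf$ it gives $[\ell_\aaa^d]=d!\,f(\aaa)$ and $[\partial_i\,\ell_\aaa^{d-2}\,\partial_j]=(d-2)!\,(\partial_i\partial_jf)(\aaa)$. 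The second identity is what makes the lemma a ``Hessian criterion'': on the generators $\partial_1,\dots,\partial_n$ the form $Q_{\ell_\aaa}^1$ is represented by the matrix $-(d-2)!\,H_f|_{\xx=\aaa}$. Since $f(\aaa)>0$, we have in particular $[\ell_\aaa^d]=d!\,f(\aaa)>0$, a sign used repeatedly below.

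For part (i), the key step is the identity $\ker(\times\ell_\aaa^{d-2}\colon R_f^1\to R_f^{d-1})=\operatorname{rad}(Q_{\ell_\aaa}^1)$. Indeed, by perfectness of the pairing $R_f^{d-1}\times R_f^1\to R_f^d\cong\R$, an element $\xi\in R_f^1$ satisfies $\xi\ell_\aaa^{d-2}=0$ in $R_f^{d-1}$ if and only if $[\xi\ell_\aaa^{d-2}\eta]=0$ for every $\eta\in R_f^1$, which says exactly that $\xi$ lies in the radical of $Q_{\ell_\aaa}^1$. Since $\dim R_f^1=\dim R_f^{d-1}$ by Poincar\'e duality, the Lefschetz multiplication $\times\ell_\aaa^{d-2}$ is an isomorphism if and only if it is injective, if and only if its kernel --- the radical of $Q_{\ell_\aaa}^1$ --- is zero, which is precisely the non-singularity of $Q_{\ell_\aaa}^1$.

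For part (ii), I would first install the Lefschetz decomposition in degree one. Since $[\ell_\aaa^d]=d!\,f(\aaa)\neq0$, the functional $\times\ell_\aaa^{d-1}\colon R_f^1\to R_f^d\cong\R$ is nonzero, so $R_f^1=\R\ell_\aaa\oplus P^1$, where $P^1:=\ker(\times\ell_\aaa^{d-1})$ has codimension one; the two summands are $Q_{\ell_\aaa}^1$-orthogonal because $Q_{\ell_\aaa}^1(\xi,\ell_\aaa)=-[\xi\ell_\aaa^{d-1}]=0$ for $\xi\in P^1$, while $Q_{\ell_\aaa}^1(\ell_\aaa,\ell_\aaa)=-[\ell_\aaa^d]=-d!\,f(\aaa)<0$. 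By definition, $\HRR_1$ with respect to $\ell_\aaa$ means that $\SLP_1$ holds and $Q_{\ell_\aaa}^1$ is positive definite on $P^1$. If $\HRR_1$ holds, then by (i) $Q_{\ell_\aaa}^1$ is non-singular, and $-Q_{\ell_\aaa}^1$ is positive definite on the line $\R\ell_\aaa$ and negative definite on the orthogonal complement $P^1$, hence has signature $(+,-,\dots,-)$. Conversely, if $-Q_{\ell_\aaa}^1$ has signature $(+,-,\dots,-)$, then it is non-singular, so $\SLP_1$ holds by (i); and since $-Q_{\ell_\aaa}^1$ is already positive definite on $\R\ell_\aaa$ while $\R\ell_\aaa\perp P^1$ with $\dim P^1=\dim R_f^1-1$, Sylvester's law of inertia forces $-Q_{\ell_\aaa}^1$ to be negative definite on $P^1$, i.e.\ $Q_{\ell_\aaa}^1$ positive definite there, so $\HRR_1$ holds.

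There is no deep obstacle --- the argument is bookkeeping around Poincar\'e duality --- but two points need care. First, the definition of $\HRR_k$ already incorporates $\SLP_k$, so part (i) genuinely feeds into part (ii) through the non-singularity of $Q_{\ell_\aaa}^1$. Second, the sign-and-dimension count in (ii) hinges on the fact that the unique positive eigenvalue of $-Q_{\ell_\aaa}^1$ is exactly the one carried by the Lefschetz line $\R\ell_\aaa$, on which $-Q_{\ell_\aaa}^1(\ell_\aaa,\ell_\aaa)=d!\,f(\aaa)>0$. Finally, it is worth noting that the decomposition $R_f^1=\R\ell_\aaa\oplus P^1$ and the negativity of $Q_{\ell_\aaa}^1$ on $\R\ell_\aaa$ use only $f(\aaa)>0$ and no Lefschetz property, which is exactly why $f(\aaa)>0$ is the lemma's sole standing assumption.
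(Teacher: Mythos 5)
Your proof is correct and follows essentially the same route as the paper: part (i) via the identification of $\ker(\times\ell_\aaa^{d-2})$ with the radical of $Q^1_{\ell_\aaa}$ through Poincar\'e duality (which the paper dismisses as obvious), and part (ii) via the orthogonal decomposition $R_f^1=\R\ell_\aaa\oplus\ker(\times\ell_\aaa^{d-1})$ together with the sign computation $-Q^1_{\ell_\aaa}(\ell_\aaa,\ell_\aaa)=d!\,f(\aaa)>0$. You merely spell out the Sylvester bookkeeping in both directions more explicitly than the paper does.
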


\begin{proof}
The statement (i) is obvious. We show (ii).
Define the map $\psi_\aaa: R^1_f \to R^d_f$ by $\psi_\aaa(h)=\ell_\aaa^{d-1} h$.
Since the map 
\[
\times \ell_{\bm{a}}^{d} : R_f^0\xrightarrow{\times \ell_{\bm{a}}} R_f^1\xrightarrow{\psi_\aaa} R_f^d\] is an isomorphism,
the decomposition $R_f^1=\R \ell_{\bm{a}}\oplus\Ker \psi_\aaa$ is orthogonal with respect to $Q_{\ell_{\bm{a}}}^1$. Since $-Q_{\ell_{\bm{a}}}^1(\ell_{\bm{a}}, \ell_{\bm{a}})=[\ell_{\bm{a}}^d]=d!f(\bm{a})>0$, it follows that $R_f$ satisfies the $\HRR_1$ w.r.t.\ $\ell_{\bm{a}}$ if and only if $-Q_{\ell_{\bm{a}}}^1$ is nondegenerate and has only one positive eigenvalue.
\end{proof}

The previous lemma implies the following fact.

\begin{lemma}
\label{prop:SLPHRR}
If $f \in \R_{\geq 0}[x_1,\dots,x_n]$ is Lorentzian,
then for any $\aaa \in R_{\geq 0}^n$ with $f(\aaa) > 0$,
$f$ has the $\SLP_1$ w.r.t.\ $\ell_\aaa$ if and only if $f$ has the $\HRR_1$ w.r.t.\ $\ell_\aaa$.
\end{lemma}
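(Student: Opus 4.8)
The plan is to treat the two implications separately, getting one for free from the definitions and spending all the work on the other. The implication ``$\HRR_1$ w.r.t.\ $\ell_\aaa$ $\Rightarrow$ $\SLP_1$ w.r.t.\ $\ell_\aaa$'' is immediate, since by our definition $\HRR_k$ already includes the requirement that $R^*_f$ have $\SLP_k$ w.r.t.\ the same linear form. So the content of the lemma is the converse, and the point is that for a \emph{Lorentzian} $f$ the extra positivity needed for $\HRR_1$ comes for free.

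For the converse, assume $f$ has $\SLP_1$ w.r.t.\ $\ell_\aaa$; by Lemma \ref{The Hessian criterion}(ii) it suffices to show $-Q_{\ell_\aaa}^1$ has signature $(+,-,\dots,-)$ on $R^1_f$. First I would record the identification of $-Q_{\ell_\aaa}^1$ with (a positive multiple of) the Hessian: the surjection $\pi\colon \R^n \to R^1_f$, $e_i \mapsto [\partial_i]$, pulls $-Q_{\ell_\aaa}^1$ back to $(d-2)!\,H_f|_{\xx=\aaa}$, where $d=\deg f$, because $-Q_{\ell_\aaa}^1([\partial_i],[\partial_j])=[\partial_i\ell_\aaa^{d-2}\partial_j]=\ell_\aaa^{d-2}(\partial_i\partial_j f)$ and applying $\ell_\aaa^{d-2}=(\sum_k a_k\partial_k)^{d-2}$ to the homogeneous degree-$(d-2)$ polynomial $\partial_i\partial_j f$ returns $(d-2)!$ times its value at $\aaa$. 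Since pulling a symmetric bilinear form back along a surjective linear map changes only the nullity and not the positive index, the positive index of $-Q_{\ell_\aaa}^1$ equals that of $H_f|_{\xx=\aaa}$, which is at most $1$ by Lemma \ref{lem:obs} (this is exactly where the Lorentzian hypothesis enters). On the other hand, from the orthogonal decomposition $R^1_f=\R\ell_\aaa\oplus\Ker\psi_\aaa$ used in the proof of Lemma \ref{The Hessian criterion} together with $-Q_{\ell_\aaa}^1(\ell_\aaa,\ell_\aaa)=[\ell_\aaa^d]=d!\,f(\aaa)>0$, the positive index is at least $1$; hence it is exactly $1$. Finally, $\SLP_1$ gives via Lemma \ref{The Hessian criterion}(i) that $Q_{\ell_\aaa}^1$ is non-singular, so the remaining $\dim R^1_f-1$ eigenvalues of $-Q_{\ell_\aaa}^1$ are negative, i.e.\ $-Q_{\ell_\aaa}^1$ has signature $(+,-,\dots,-)$, and Lemma \ref{The Hessian criterion}(ii) yields $\HRR_1$.

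The only step requiring real care is the identification of $-Q_{\ell_\aaa}^1$ with $(d-2)!\,H_f|_{\xx=\aaa}$ through $\pi$, together with the elementary linear-algebra fact that a surjective pullback preserves the positive index of inertia; the rest is bookkeeping with Lemmas \ref{lem:obs} and \ref{The Hessian criterion}. One can also avoid the pullback language entirely: $\SLP_1$ makes $Q_{\ell_\aaa}^1$ non-singular, hence non-singular on $\Ker\psi_\aaa$ as well, and if $-Q_{\ell_\aaa}^1$ failed to be negative definite on $\Ker\psi_\aaa$ it would have a positive eigenvalue there, which added to the positive direction $\R\ell_\aaa$ would force $-Q_{\ell_\aaa}^1$ — and therefore $H_f|_{\xx=\aaa}$ — to have at least two positive eigenvalues, contradicting $f$ being Lorentzian.
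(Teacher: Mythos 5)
Your proposal is correct and follows essentially the same route as the paper: identify $-Q_{\ell_\aaa}^1$ with $(d-2)!\,H_f|_{\xx=\aaa}$ via the generators $\partial_1,\dots,\partial_n$, invoke Sylvester's law (your ``surjective pullback preserves positive index'') together with Lemma~\ref{lem:obs} to cap the positive index at one, and finish with Lemma~\ref{The Hessian criterion}. You merely spell out more explicitly the ``at least one positive eigenvalue'' and non-singularity steps that the paper leaves to its citation of Lemma~\ref{The Hessian criterion}.
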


\begin{proof}
The Hessian matrix $H_{f}|_{\bm{x}=\bm{a}}$ is (a positive scalar multiple of) the representation of the
symmetric bilinear form $-{{Q_{\ell_{\bm{a}}}^1}}:R_f^1 \times R_f^1 \to \R$ w.r.t.\ the generating set $\{\partial_1,\dots,\partial_n\}$ of $R_f^1$.
Indeed, by definition, we have 
\begin{align*}
{-Q_{\ell_{\bm{a}}}^1}(\partial_i, \partial_j) ={[\partial_i \ell_{\bm{a}}^{d-2}\partial_j]}=\left(a_1 \partial_1+\cdots+a_n\partial_n \right)^{d-2}\left(\partial_i \partial_jf\right)={(d-2)!}(\partial_i \partial_j f) | _{\bm{x}=\bm{a}},
\end{align*}
where $d=\deg f$.
Since Sylvester's law tells that the number of positive eigenvalues of the symmetric matrix representing a fixed symmetric bilinear form does not depend on the choice of a generating set,
the number of positive eigenvalues of $-Q_{\ell_{\bm{a}}}$ equals to that of $H_f|_{\xx=\aaa}$.
Then the assertion follows from Lemmas \ref{lem:obs} and \ref{The Hessian criterion}.
\end{proof}

We also note the next fact,
which immediately follows from the fact that $\partial_1,\dots,\partial_n$ is an $\R$-basis of $R_f^1$ if and only if $\partial_1 f ,\dots, \partial_n f$ are $\R$-linearly independent.

\begin{lemma}
\label{lem:HessianSLP}
Let $f \in \R[x_1,\dots,x_n]$ be a homogeneous polynomial of degree $\geq 2$ and $\aaa \in \R^n$.
If $\partial_1 f ,\dots,\partial_n f$ are $\R$-linearly independent,
then $R_f$ has the $\SLP_1$ $($resp.\ $\HRR_1)$ w.r.t.\ $\ell_\aaa \in S$ if and only if $H_f|_{\xx=\aaa}$ is non-singular $($resp.\ has signature $(+,-,\dots,-))$.
\end{lemma}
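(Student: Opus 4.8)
The plan is to deduce the statement from Lemma~\ref{The Hessian criterion} together with the linear-algebra observation recorded just before the statement. First I would unwind the hypothesis. For $\aaa=(a_1,\dots,a_n)$ we have $\ell_\aaa f=(a_1\partial_1+\cdots+a_n\partial_n)f=\sum_i a_i(\partial_i f)$, so a linear form $\ell_\aaa$ lies in $\Ann_S(f)$ exactly when $\sum_i a_i\partial_i f=0$. Hence $\partial_1 f,\dots,\partial_n f$ are $\R$-linearly independent if and only if $\Ann_S(f)$ contains no nonzero linear form, i.e.\ if and only if the residue classes of $\partial_1,\dots,\partial_n$ form an $\R$-\emph{basis} of $R_f^1$ (they always span it). From now on assume we are in this case.

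Next I would invoke the computation carried out inside the proof of Lemma~\ref{prop:SLPHRR}: writing $d=\deg f$, one has $-Q_{\ell_\aaa}^1(\partial_i,\partial_j)=(d-2)!\,(\partial_i\partial_j f)|_{\xx=\aaa}$, so $H_f|_{\xx=\aaa}$ is exactly $(d-2)!$ times the Gram matrix of the symmetric bilinear form $-Q_{\ell_\aaa}^1$ on $R_f^1$ with respect to the generating set $\{\partial_1,\dots,\partial_n\}$. This is the step where the hypothesis is really used: since $\{\partial_1,\dots,\partial_n\}$ is now a genuine basis of $R_f^1$ and $(d-2)!>0$, that Gram matrix represents $-Q_{\ell_\aaa}^1$ faithfully, so $H_f|_{\xx=\aaa}$ and $-Q_{\ell_\aaa}^1$ have the same rank and the same signature. (In Lemma~\ref{prop:SLPHRR}, where $\{\partial_i\}$ is only a spanning set, Sylvester's law gave merely the equality of the \emph{number of positive eigenvalues}, which sufficed there; transferring non-singularity and the full signature $(+,-,\dots,-)$ needs an honest basis.)

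Finally I would read off the conclusion from Lemma~\ref{The Hessian criterion}. For the $\SLP_1$ part: $H_f|_{\xx=\aaa}$ is non-singular $\iff$ $-Q_{\ell_\aaa}^1$ is non-singular $\iff$ $Q_{\ell_\aaa}^1$ is non-singular $\iff$ $R_f$ has $\SLP_1$ w.r.t.\ $\ell_\aaa$, the last equivalence being Lemma~\ref{The Hessian criterion}(i). For the $\HRR_1$ part: $H_f|_{\xx=\aaa}$ has signature $(+,-,\dots,-)$ $\iff$ $-Q_{\ell_\aaa}^1$ has signature $(+,-,\dots,-)$ $\iff$ $R_f$ has $\HRR_1$ w.r.t.\ $\ell_\aaa$, by Lemma~\ref{The Hessian criterion}(ii).

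I expect no obstacle of real substance here: the lemma is a bookkeeping consequence of material already established. The two points that must be handled with care are the basis-versus-spanning-set distinction in the middle paragraph, and, in citing Lemma~\ref{The Hessian criterion}, the standing positivity $f(\aaa)>0$ (which holds in every application in this paper, since $f_M$ and $\overline P_M$ take positive values at the relevant points). One could instead sidestep part~(i) of Lemma~\ref{The Hessian criterion} altogether by noting directly that $\SLP_1$ w.r.t.\ $\ell_\aaa$ --- i.e.\ $\times\ell_\aaa^{d-2}\colon R_f^1\to R_f^{d-1}$ being an isomorphism --- is equivalent, via Poincar\'{e} duality, to non-degeneracy of the pairing $(\xi_1,\xi_2)\mapsto[\xi_1\ell_\aaa^{d-2}\xi_2]=-Q_{\ell_\aaa}^1(\xi_1,\xi_2)$ on $R_f^1$, which requires no positivity assumption.
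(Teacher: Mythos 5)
Your proof is correct and follows exactly the route the paper intends: the paper itself only remarks that the lemma ``immediately follows'' from the equivalence between linear independence of $\partial_1 f,\dots,\partial_n f$ and $\{\partial_1,\dots,\partial_n\}$ being a basis of $R_f^1$, and you supply precisely the missing bookkeeping (Gram matrix of $-Q^1_{\ell_\aaa}$ in that basis, then Lemma~\ref{The Hessian criterion}). Your side remarks --- that a genuine basis, not just a spanning set, is needed to transfer the full signature, and that the standing hypothesis $f(\aaa)>0$ of Lemma~\ref{The Hessian criterion} is silently assumed but holds in every application --- are both accurate and, if anything, more careful than the paper.
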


\subsection{The local HRR and the SLP}
We say that a homogeneous polynomial $f\in\R[x_1, \ldots, x_n]$ of degree $d\geq 2k+1$ has the local $\HRR_k$ w.r.t.\ a linear form $\ell \in S$ if, for any $i=1,2,\dots,n$,
$\partial_i f$ is either zero or has the $\HRR_k$ w.r.t.\ $\ell$.
The next proposition would be known for experts ,
but we include its proof since we cannot find a version which covers the case we need (see e.g., \cite[Proposition 7.15]{AHK} for a similar statement).

\begin{lemma}
\label{prop:local}
Let $f \in \R_{\geq 0}[x_1,\dots,x_n]$ be a homogeneous polynomial of degree $d$ and $k$ a positive integer with $d \geq 2k+1$, and $\aaa=(a_1,\dots,a_n) \in \R^n$.
Suppose that $f$ has the local $\HRR_k$ w.r.t.\ $\ell_\aaa$.
\begin{itemize}
\item[(i)]
If $\aaa \in \R^n_{>0}$, then
$R_f$ has the $\SLP_k$ w.r.t.\ $\ell_\aaa$.
\item[(ii)]
If $a_1=0$, $a_2,\dots,a_n>0$ and $\{\xi \in R_f^k\mid \partial_i \xi=0 \mbox{ for }i=2,\dots,n\}=\{0\}$,
then $R_f$ has the $\SLP_k$ w.r.t.\ $\ell_\aaa$.
\end{itemize}
\end{lemma}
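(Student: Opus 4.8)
The plan is to deduce the $\SLP_k$ in both cases from the injectivity of the Lefschetz map $\times\ell_\aaa^{\,d-2k}\colon R_f^k\to R_f^{d-k}$ (which, by Poincar\'e duality, is then automatically an isomorphism), and the mechanism will be a ``gluing identity'' that expresses the degree-$k$ Hodge--Riemann form of $f$ through those of the first partial derivatives $\partial_i f$. The key numerical point throughout is that each $R_{\partial_i f}$ is a Poincar\'e duality algebra of socle degree $d-1$, so the relevant powers of $\ell_\aaa$ shift by one.

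First I would set up the algebraic dictionary. For each $i$ with $\partial_i f\ne0$, the inclusion $\Ann_S(f)\subseteq\Ann_S(\partial_i f)$ induces a surjection of graded algebras $\pi_i\colon R_f\twoheadrightarrow R_{\partial_i f}$ fixing the classes of the $\partial_j$, and for $\xi\in R_f^k$ the conditions $\partial_i\xi=0$ in $R_f$, $\pi_i(\xi)=0$, and $\partial_i(\xi f)=0$ are all equivalent (each says $\xi\cdot\partial_i f=0$ in $\R[\xx]$). Consequently $\pi=(\pi_i)\colon R_f^k\to\bigoplus_i R_{\partial_i f}^k$ is injective: if $\pi_i(\xi)=0$ for all $i$ then $\partial_j(\xi f)=\xi(\partial_j f)=0$ for every $j$ (trivially when $\partial_j f=0$), so $\xi f$ is a constant, hence $0$ because $\deg(\xi f)=d-k\geq k+1>0$, and thus $\xi=0$. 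Next I would establish the gluing identity: using $[\partial_i D]_{R_f}=[\pi_i(D)]_{R_{\partial_i f}}$ for $D\in R_f^{d-1}$ (both sides equal the $x_i$-coefficient of the linear form $D(\partial)f$), expanding $\ell_\aaa=\sum_i a_i\partial_i$ and taking $D=\ell_\aaa^{\,d-2k-1}\xi\eta$ (which has degree $d-1$, since $d\geq 2k+1$) yields
\[
(-1)^k\big[\ell_\aaa^{\,d-2k}\xi\eta\big]_{R_f}=\sum_i a_i\,Q_{\ell_i}^{\,k}\!\big(\pi_i\xi,\pi_i\eta\big)\qquad(\xi,\eta\in R_f^k),
\]
where $\ell_i=\pi_i(\ell_\aaa)$, $Q_{\ell_i}^{\,k}$ is the degree-$k$ Hodge--Riemann form of $R_{\partial_i f}$, and terms with $\partial_i f=0$ are absent; here one uses that the exponent $d-2k-1$ equals $(d-1)-2k$, the power of $\ell_i$ appearing in $Q_{\ell_i}^{\,k}$ because $R_{\partial_i f}$ has socle degree $d-1$.

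Now suppose $\xi\in R_f^k$ satisfies $\ell_\aaa^{\,d-2k}\xi=0$. Applying the ring homomorphism $\pi_i$ gives $\ell_i^{\,d-2k}(\pi_i\xi)=0$, and since $d-2k=(d-1)+1-2k$ this says precisely that $\pi_i\xi$ lies in the primitive subspace of $R_{\partial_i f}^k$; by the local $\HRR_k$ hypothesis $Q_{\ell_i}^{\,k}$ is positive definite there, so $Q_{\ell_i}^{\,k}(\pi_i\xi,\pi_i\xi)\geq0$, with equality only if $\pi_i\xi=0$. Setting $\eta=\xi$ in the gluing identity and using $\ell_\aaa^{\,d-2k}\xi=0$ gives $\sum_i a_i\,Q_{\ell_i}^{\,k}(\pi_i\xi,\pi_i\xi)=0$. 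In case (i) every $a_i>0$, so every summand vanishes, hence $\pi_i\xi=0$ for all $i$ and $\xi=0$ by injectivity of $\pi$. In case (ii) the sum runs only over $i=2,\dots,n$ (as $a_1=0$) with positive weights $a_i$, so $\pi_i\xi=0$, equivalently $\partial_i\xi=0$, for $i=2,\dots,n$; the hypothesis $\{\xi\in R_f^k\mid \partial_i\xi=0\ \text{for}\ i=2,\dots,n\}=\{0\}$ then forces $\xi=0$. In either case $\times\ell_\aaa^{\,d-2k}$ is injective, which is the asserted $\SLP_k$.

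The main obstacle --- really the only delicate point --- is the bookkeeping of the powers of $\ell_\aaa$ against the socle degree $d-1$ of $R_{\partial_i f}$: the one-step shift simultaneously makes $d-2k-1=(d-1)-2k$ the ``Hodge--Riemann exponent'' in the gluing identity and $d-2k=(d-1)+1-2k$ the ``primitivity exponent'' in the vanishing $\ell_i^{\,d-2k}(\pi_i\xi)=0$, and it is exactly this coincidence that lets the single-degree hypothesis $\HRR_k$ on each $\partial_i f$ suffice, with no need for the strong Lefschetz property in other degrees. I would also take care that indices $i$ with $\partial_i f=0$ create no difficulty (they contribute nothing to the identity and, since $f$ is then independent of $x_i$, leave the injectivity argument for $\pi$ intact) and, for part (ii), that the equivalence $\pi_i\xi=0\Leftrightarrow\partial_i\xi=0$ in $R_f$ is what links the conclusion to the stated hypothesis.
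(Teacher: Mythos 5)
Your proposal is correct and follows essentially the same route as the paper: the identity $(-1)^k[\xi^2\ell_\aaa^{d-2k}]=\sum_i a_i\,Q_i(\pi_i\xi,\pi_i\xi)$ is exactly the paper's key computation, the degree bookkeeping (socle degree $d-1$ for $R_{\partial_i f}$, primitivity exponent $d-2k$) matches, and your injectivity of $\pi=(\pi_i)$ is the paper's observation that $\{\xi\in R_f\mid\partial_i\xi=0\text{ for all }i\}=R_f^d$. No gaps.
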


\begin{proof}
We prove (i) and (ii) simultaneously.
Without loss of generality, we may assume that $f\notin\R[x_1, \ldots, \hat{x}_i, \ldots, x_n]$ for any $i$. 
Consider the following two maps:  
\[[-] : R_f^d\xrightarrow{\sim}\R,\ h \mapsto  [h]=h(\partial_1, \ldots, \partial_n)f,\]
\[[-]_i : R_{\partial_if}^{d-1}\xrightarrow{\sim}\R ,\ h' \mapsto [h']_i=h'(\partial_1, \ldots, \partial_n)\partial_if.\]
Also, let $Q_i$ be the Hodge--Riemann bilinear form  for $R_{\partial_if}=S/\Ann_S(\partial_if)$ with respect to $\ell_{\bm{a}}$:  
\[Q_i : R^k_{\partial_if}\times R^k_{\partial_if} \to \R,\ (v,w) \mapsto Q_i(v, w)=(-1)^k [v\ell_{\bm{a}}^{d-2k-1}w]_i. \]

Suppose that $L\in R_f^k$ satisfies $L\ell_{\bm{a}}^{d-2k}=0$ in $R_f^{d-k}$. To prove the desired statement,
what we must prove is $L=0$ under the assumption of (i) or (ii).
Since $L\ell_{\bm{a}}^{d-2k}=0$ in $R_{\partial_i f}^{d-k}$ as well, $L \in R^k_{\partial_i f}$ is contained in the kernel of
\[ \times \ell_\aaa^{d-2k}: R^k_{\partial_i f} \to R^{d-k}_{\partial_i f}.\]
Since $Q_i$ is positive definite on the kernel of the above map, we have 
\begin{equation}\label{2-3}
Q_i(L, L)\geq 0,
\end{equation}
and $Q_i(L, L)=0$ if and only if $L=0$ in $R_{\partial_if}=S/\Ann_S(\partial_if)$.
On the other hand, since $L\ell_{\bm{a}}^{d-2k}=0$ in $R_f^{d-k}$, we have
\[0=[L^2\ell_{\bm{a}}^{d-2k}]=\left[\sum_{i=1}^n{a_i\partial_iL^2\ell_{\bm{a}}^{d-2k-1}}\right]=
\sum_{i=1}^n{a_i[L^2\ell_{\bm{a}}^{d-2k-1}]_i}=(-1)^k\sum_{i=1}^n{a_iQ_i(L, L)}.\]

Now assume $a_i>0$ for all $i$.
We note that 
$\{\xi \in R_f \mid \partial_i \xi=0 \mbox{ for all }i\}=R_f^d$ since, for any $D \in S$ of degree $<d$, if $Df \ne 0$ then $\partial_i (Df) \ne 0$ for some $i$.
The above equation and \eqref{2-3} tell that $Q_i(L, L)=0$ for all $i$, and therefore $L=0$ in $R_{\partial_i f}$ for all $i$.
But, since $\{\xi \in R_f \mid \partial_i \xi=0 \mbox{ for all }i\}=R_f^d$, this implies $L=0$ in $R_{f}$, proving (i).

The proof of (ii) is similar.
Indeed, if $a_1=0$ and $a_2,\dots,a_n>0$, then the same argument tells $Q_i(L, L)=0$ and $L=0$ in $R_{\partial_i f}$ for all $i=2,\dots,n$.
Then $\partial_i L=0$ in $R_f$ for all $i=2,\dots,n$, and the assumption of (ii) tells $L=0$ in $R_f$.
\end{proof}

The following statement immediately follows from Lemmas \ref{prop:SLPHRR} and \ref{prop:local}, both of which are basic,
but is crucial to prove Theorem \ref{thm:1-1}(i).

\begin{theorem}
\label{thm:LorentzianHRR}
If $f \in \R[x_1,\dots,x_n]$ is Lorentzian, then $f$ has the $\HRR_1$ w.r.t.\ $\ell_\aaa$ for any $\aaa \in \R^n_{>0}$.
\end{theorem}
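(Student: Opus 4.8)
The plan is to prove the statement by induction on $d=\deg f$, with Lemmas \ref{prop:SLPHRR} and \ref{prop:local} doing the real work. The first reduction is purely formal: since $f$ is Lorentzian it is homogeneous of degree $\ge 2$ with nonnegative coefficients, hence nonzero, so $f(\aaa)>0$ for every $\aaa\in\R^n_{>0}$. Lemma \ref{prop:SLPHRR} then tells us that, for such $\aaa$, $f$ satisfies $\HRR_1$ w.r.t.\ $\ell_\aaa$ as soon as it satisfies $\SLP_1$ w.r.t.\ $\ell_\aaa$. So it suffices to prove that every Lorentzian $f$ has the $\SLP_1$ with respect to $\ell_\aaa$ for all $\aaa\in\R^n_{>0}$.

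For the base case $d=2$ the Lefschetz map in degree $1$ is $\times\ell_\aaa^{\,d-2}=\times\ell_\aaa^{0}\colon R_f^1\to R_f^1$, i.e.\ the identity, so $\SLP_1$ holds automatically; this case must be handled separately precisely because the local-to-global Lemma \ref{prop:local} requires $d\ge 2k+1=3$.

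For the inductive step, let $d\ge 3$ and assume the result in degree $d-1$. For each index $i$ with $\partial_i f\ne 0$, I would first verify that $\partial_i f$ is again Lorentzian: it is homogeneous of degree $d-1\ge 2$ with nonnegative coefficients, and any partial derivative of $\partial_i f$ of order $\le (d-1)-2$ is a partial derivative of $f$ of order $\le d-2$, hence identically zero or log-concave at every point of $\R^n_{>0}$ by the Lorentzian hypothesis on $f$. By the induction hypothesis every such $\partial_i f$ has $\HRR_1$ w.r.t.\ $\ell_\aaa$, so $f$ has the local $\HRR_1$ w.r.t.\ $\ell_\aaa$. Since $\aaa\in\R^n_{>0}$ and $d\ge 3$, Lemma \ref{prop:local}(i) (with $k=1$) gives that $R_f$ has the $\SLP_1$ w.r.t.\ $\ell_\aaa$, and then Lemma \ref{prop:SLPHRR} upgrades this to $\HRR_1$, closing the induction.

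Given the machinery of Section \ref{sec:HRR}, the argument is short and I do not anticipate a genuine obstacle. The only real point of care is the $d=2$ base case, where Lemma \ref{prop:local} does not apply and one instead uses that $\SLP_1$ is vacuous in degree $2$; a minor bookkeeping point, which I would state as an explicit sublemma since the whole induction hinges on it, is that every nonzero partial derivative of a Lorentzian polynomial of degree $\ge 3$ is itself Lorentzian.
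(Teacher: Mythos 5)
Your argument is correct and follows essentially the same route as the paper: reduce $\HRR_1$ to $\SLP_1$ via Lemma \ref{prop:SLPHRR}, then induct on the degree using Lemma \ref{prop:local}(i), with the degree-$2$ case handled separately because $\SLP_1$ is automatic there. Your extra checks (that $f(\aaa)>0$ on the positive orthant and that nonzero partials of a Lorentzian polynomial of degree $\ge 3$ are again Lorentzian) are points the paper passes over quickly but are verified exactly as you describe.
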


\begin{proof}
By Lemma \ref{prop:SLPHRR}, we only have to show that $R_f$ has the $\SLP_1$ w.r.t.\ $\ell_\aaa$. We prove by induction on $d=\deg f$. When $d=2$, this is trivial since any degree $2$ homogeneous polynomial has the $\SLP_1$ w.r.t.\ any linear form by definition.
When $d\geq3$, by Lemma \ref{prop:local}, it suffices to show that for each $i$  with $\partial_i f \ne 0$, $\partial_i f$ satisfies the $\HRR_1$ w.r.t.\ $\ell_\aaa$. Since $\partial_i f$ is also a Lorentzian polynomial if it is non-zero  by the definition of the Lorentzian property, the claim is trivial by the induction hypothesis. 
\end{proof}

\begin{remark}
Maeno--Numata \cite{MN} conjectured that, for any matroid $M$, $R_{f_M}$ has the $\SLP_k$ for all $k$ w.r.t.\ some linear form $\ell$.
Since $f_M$ is Lorentzian, the above statement verifies this conjecture when $k=1$.
\end{remark}

\section{Proof of main results}

In this section, we prove Theorem \ref{thm:1-1} and Corollary \ref{cor:main} in the introduction.
We first prove Theorem \ref{thm:1-1}.
Since $f_M$ and $\overline P_M$ are Lorentzian,
the statement (i) and the statement (ii) when $a_0 \ne 0$ immediately follow from
Theorems \ref{thm:LD} and \ref{thm:LorentzianHRR} together with Lemma \ref{lem:HessianSLP}.
Then the next statement completes the proof of Theorem \ref{thm:1-1}.

\begin{theorem}
\label{thm:main}
Let $M$ be a matroid on $[n]$ of rank $r \geq 2$
and $\aaa=(a_1,\dots,a_n) \in \R^n_{>0}$.
Then $\overline P_M$ 
has the $\HRR_1$ w.r.t.\ $\ell_\aaa=a_1\partial_1+\cdots+a_n \partial_n$.
\end{theorem}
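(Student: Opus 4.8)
The plan is to deduce the statement from the strong Lefschetz property together with the machinery of Section 3, and to establish that Lefschetz property by induction on $r$ using the local Hodge--Riemann criterion. I begin with some reductions. Since $P_M$ is Lorentzian and partial derivatives of Lorentzian polynomials are again Lorentzian, $\overline P_M=\partial_0^{n-r}P_M$ is a nonzero Lorentzian polynomial with nonnegative coefficients, and $\overline P_M(0,a_1,\dots,a_n)=(n-r)!\,f_M(a_1,\dots,a_n)>0$ because $f_M$ has nonnegative coefficients and $\calB(M)\neq\emptyset$. Regarding $\ell_\aaa$ as a linear form in $\partial_0,\dots,\partial_n$ with vanishing $\partial_0$-coefficient and applying Lemma \ref{prop:SLPHRR} at the point $(0,a_1,\dots,a_n)\in\R^{n+1}_{\geq 0}$, it suffices to show that $\overline P_M$ has the $\SLP_1$ with respect to $\ell_\aaa$. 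This I would prove by induction on $r$; the base case $r=2$ is trivial, since every homogeneous quadric has the $\SLP_1$ with respect to any linear form.

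For the inductive step $r\geq 3$ the plan is to invoke Lemma \ref{prop:local}(ii) with $x_0$ playing the role of the distinguished variable. A direct computation from $\overline P_M=\partial_0^{n-r}P_M$ and Lemma \ref{lem:basicproperty} gives $\partial_0\overline P_M=\overline P_{TM}$ and, for $i\in[n]$, $\partial_i\overline P_M=\overline P_{M/i}$ when $i$ is not a loop and $\partial_i\overline P_M=0$ when $i$ is a loop. As $TM$ and $M/i$ have rank $r-1\geq 2$, the induction hypothesis shows that $\overline P_{TM}$ and every $\overline P_{M/i}$ satisfy the $\HRR_1$ with respect to $\ell_\aaa$ (for $\overline P_{M/i}$, which does not involve $x_i$, this is the $\HRR_1$ for the linear form obtained from $\ell_\aaa$ by deleting $\partial_i$). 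Thus $\overline P_M$ has the local $\HRR_1$ with respect to $\ell_\aaa$, and Lemma \ref{prop:local}(ii) will yield the $\SLP_1$ for $\overline P_M$ once the nondegeneracy hypothesis
\[
\{\xi\in R^1_{\overline P_M}\mid\partial_i\xi=0\text{ for all }i\in[n]\}=\{0\}
\]
is verified. Proving this last statement is the crux.

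I would verify it as follows. For a linear form $D=\sum_{j=0}^n c_j\partial_j$, the class $\xi=\overline D$ lies in the set on the left exactly when $D\overline P_M$ is a polynomial in $x_0$ alone, i.e.\ $D\overline P_M=c\,x_0^{r-1}$ for some $c\in\R$, and one must deduce $D\overline P_M=0$. Passing to the parallel-class sums $y_a=\sum_{i\in E_a}x_i$, Lemma \ref{lem:basicproperty}(iii) gives $\overline P_M=\sum_{k=0}^r\frac{(n-k)!}{(r-k)!}x_0^{r-k}g_k(y_1,\dots,y_s)$, where $g_k$ is the size-$k$ independent-set generating polynomial of the simplification $\overline M$; on $\overline P_M$ every $\partial_i$ acts as $0$ if $i$ is a loop and as $\partial_{y_a}$ if $i\in E_a$, so $D\overline P_M=\bigl(c_0\partial_0+\sum_{a=1}^s\sigma_a\partial_{y_a}\bigr)\overline P_M$ with $\sigma_a=\sum_{i\in E_a}c_i$. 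Matching the two sides of $D\overline P_M=c\,x_0^{r-1}$ coefficient-wise in the $y_a$: the linear-in-$y$ terms force all $\sigma_a$ equal, say to $\sigma$, with $(1-s)\sigma=(n-1)c_0$; and for $2\leq m\leq r-1$ the degree-$m$-in-$y$ terms read, for each independent $m$-subset $A$ of $\overline M$, as $(n-m)c_0+\bigl(s-|\langle A\rangle_{\overline M}|\bigr)\sigma=0$. Unless every independent set of $\overline M$ of size $\leq r-1$ is a flat — equivalently, unless $\overline M=U_{r,s}$ — some independent $m$-subset has closure size differing from that of another for the same $m$, and then one concludes $\sigma=0$, then $c_0=0$, whence $D\overline P_M=0$; if instead $\overline M=U_{r,s}$, then either $n>s$ and the relations from $m=1$ and $m=2$ form a nonsingular system in $(c_0,\sigma)$ — its determinant $(n-1)(s-|\langle A\rangle_{\overline M}|)-(s-1)(n-2)$ cannot vanish, as that would force a line of $\overline M$ to have fewer than two points — again giving $D\overline P_M=0$, or $n=s$ and $D$ is a scalar multiple of $\partial_0-\sum_i\partial_i$, which annihilates $\overline P_{U_{r,n}}$. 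In every case $D\overline P_M=c\,x_0^{r-1}=0$, which is exactly the required nondegeneracy, so the induction closes.

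I expect this final coefficient comparison to be the main obstacle: one has to pin down precisely which coefficients to match and to control the combinatorics of the closures of the small independent sets of $\overline M$, distinguishing the uniform case (where the relation $\partial_0-\sum_i\partial_i$ intervenes) from the rest. Everything else is formal, assembled from the material of Sections 2 and 3.
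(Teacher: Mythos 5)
Your overall architecture is exactly the paper's: reduce $\HRR_1$ to $\SLP_1$ via Lemma \ref{prop:SLPHRR} at the boundary point $(0,a_1,\dots,a_n)$, induct on $r$ with trivial base case $r=2$, and run the inductive step through Lemma \ref{prop:local}(ii) using $\partial_0\overline P_M=\overline P_{TM}$ and $\partial_i\overline P_M=\overline P_{M/i}$, so that everything reduces to the socle condition $\{\xi\in R^1_{\overline P_M}\mid \partial_i\xi=0\ \text{for all}\ i\in[n]\}=\{0\}$. Up to that point your argument is sound and matches the paper.

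Your verification of the socle condition, however, rests on a false combinatorial claim. After reducing to the system $(n-m)c_0+\bigl(s-|\langle A\rangle_{\overline M}|\bigr)\sigma=0$ over independent $m$-subsets $A$ of $\overline M$ with $1\le m\le r-1$, you assert that whenever $\overline M\ne U_{r,s}$ there exist, \emph{for a common value of $m$}, two independent $m$-subsets with closures of different sizes, and you use this to force $\sigma=0$. This dichotomy fails: take $M$ to be the Fano plane (rank $3$, so the only relevant value is $m=2$); every independent $2$-set has closure a $3$-point line, so all closures for $m=2$ have the same size, yet $M\ne U_{3,7}$. The same happens for any projective plane, and more generally whenever all independent $m$-sets happen to span flats of a common size for each $m\le r-1$. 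For such $M$ your argument gives no way to conclude $\sigma=0$. (The conclusion is still true there: pairing the $m=1$ equation with an $m=2$ equation gives a $2\times 2$ system with determinant $s+n-2-(n-1)\mu$, $\mu=|\langle A\rangle_{\overline M}|$, which is nonzero unless $s=n$ and $\mu=2$; but that is a different argument from the one you wrote, and your own nonsingularity check is carried out only in the uniform case.)

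The gap is avoidable, and the paper's resolution shows why your closure-size analysis is not needed at all: the goal is not $c_0=\sigma=0$ but merely the vanishing of the single scalar $c$ in $D\overline P_M=c\,x_0^{r-1}$. Comparing only the coefficients of $x_0^{r-1}$ and $x_0^{r-2}$ gives $c\propto nb_0+\sum_{k}b_k$ together with $\sum_{j\ne k}b_j=-(n-1)b_0$ for all $k$, whence $b_1=\dots=b_n=-b_0$ by Lemma \ref{lem:linearalgebra} and $c=0$. In your parallel-class variables the same two coefficients give $c\propto nc_0+s\sigma$ and $\sigma=-\tfrac{n-1}{s-1}c_0$, hence $c\propto c_0\tfrac{s-n}{s-1}$; this already vanishes when $s=n$, and when $s<n$ a single $m=2$ equation (which exists since $r\ge 3$ in the inductive step) pairs with the $m=1$ equation to form a system that is automatically nonsingular, as noted above. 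Either repair closes the induction without any control on the closures of larger independent sets.
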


\begin{proof}
To prove this, we may assume that $M$ is loopless.
Also, it suffices to prove that $\overline P_M$ has the $\SLP_1$ with respect to $\ell_\aaa$ since $\SLP_1$ and $\HRR_1$ are equivalent in this case by Lemma \ref{prop:SLPHRR}.
We prove that $\overline P_M$ has the $\SLP_1$ w.r.t.\ $\ell_\aaa$ by using induction on $r$.

If $M$ has rank $2$, then the assertion is obvious because any degree $2$ homogeneous polynomial has the $\SLP_1$.

Suppose that $M$ has rank $r \geq 3$.
Since $\partial_0 \overline P_M=\overline P_{TM}$
and $\partial_i \overline P_M=\overline P_{M / i}$, the induction hypothesis and Lemma \ref{prop:local}(ii) tell that
it suffices to prove
\begin{align}
\label{socle}
\{ L \in R_{\overline P_M}^1 \mid \partial_i L=0
\mbox{ for }i=1,2,\dots,n\} =\{0\}.
\end{align}
Note that $R_{\overline P_M}$ is a quotient ring of $\R[\partial_0,\dots,\partial_n]$. Let $L=b_0\partial_0+b_1\partial_1+ \cdots +b_n \partial_n$ and assume $\partial_i L \overline P_M=0$ for all $i=1,2,\dots,n$.
To prove \eqref{socle}, what we must prove is $L \overline P_M=0$.

Since  $\partial_i L \overline P_M=0$ for all $i=1,2,\dots,n$,
we have
\begin{align*}
L \overline P_M= c x_0^{r-1}
\end{align*}
for some $c \in \R$.
Let $f_k=\sum_{I \in \calI(M),\ |I|=k} (\prod_{i \in I} x_i)$ for $k=1,\dots,r$.
Then $\overline P_M=\frac {n!} {r!}x_0^r+\frac {(n-1)!} {(r-1)!}x_0^{r-1}f_1+\frac {(n-2)!} {(r-2)!}x_0^{r-2}f_2 + \cdots$, so $L\overline P_M$ is the polynomial of the form
{\small
$$
\frac {(n-1)!} {(r-1)!} \left(nb_0+\sum_{k=1}^n b_k \right)x_0^{r-1} +\frac {(n-2)!} {(r-2)!}
\left( (n-1) b_0 f_1+ \left( \sum_{k=1}^n b_k \partial_k \right)f_2 \right) x_0^{r-2}+ \cdots.$$
}

\noindent
Since $f_1= \sum_{k=1}^n x_k$ and $f_2=\sum_{1 \leq i<j \leq n} x_ix_j$,
comparing coefficients of $x_0^{r-1}$ and $x_0^{r-2}$ in $L\overline P_M=c x_0^{r-1}$, we have
$$
c= nb_0+b_1+\cdots +b_n
$$
and
$$
\textstyle\sum_{k=1}^n \left( (n-1)b_0+ \sum_{ j \ne k} b_j \right) x_k=0.$$
Then we have $\sum_{j \ne k} b_j=-(n-1)b_0$ for all $k=1,2,\dots,n$,
and therefore $b_1=\cdots=b_n=-b_0$ by Lemma \ref{lem:linearalgebra}.
This implies 
$c=nb_0+b_1+ \cdots +b_n=0,$
and $L \overline P_M=cx_0^{r-1}=0$ as desired.
\end{proof}

In the rest of this section,
we prove Corollary \ref{cor:main}.
We need the following two technical lemmas.

\begin{lemma}
\label{lem:technical1}
Let  $f \in \R[x_1,\dots,x_n]$ be a homogeneous polynomial having the $\HRR_1$ w.r.t.\ $\ell_\aaa$ with $\aaa \in \R^n$ and let $\ell_1,\ell_2 \in S$ be linear forms.
If $\ell_1$ and $\ell_2$ are $\R$-linearly independent in $\R_f^1$ and $(\ell_1\ell_1 f)(\aaa) >0$, then
$$\det \begin{pmatrix}
(\ell_1\ell_1 f)(\aaa) & (\ell_1  \ell_2 f) (\aaa)\\
(\ell_1\ell_2 f)(\aaa) & (\ell_2  \ell_2 f) (\aaa)
\end{pmatrix}<0.$$
\end{lemma}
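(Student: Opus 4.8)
The plan is to reduce the claim to the signature statement in the definition of the Hodge--Riemann relation by passing to the $2$-dimensional subspace $V=\Span_{\R}\{\ell_1,\ell_2\}\subset R_f^1$. Since $f$ has the $\HRR_1$ w.r.t.\ $\ell_\aaa$, Lemma \ref{The Hessian criterion}(ii) tells us that the symmetric bilinear form $-Q_{\ell_\aaa}^1$ on $R_f^1$ has signature $(+,-,\dots,-)$; equivalently, it has exactly one positive eigenvalue and is non-degenerate. The Gram matrix of $-Q_{\ell_\aaa}^1$ restricted to $V$, computed in the basis $\{\ell_1,\ell_2\}$, is (up to the positive scalar $(d-2)!$, with $d=\deg f$) precisely
\[
G=\begin{pmatrix}
(\ell_1\ell_1 f)(\aaa) & (\ell_1\ell_2 f)(\aaa)\\
(\ell_1\ell_2 f)(\aaa) & (\ell_2\ell_2 f)(\aaa)
\end{pmatrix},
\]
using the identity $-Q_{\ell_\aaa}^1(\ell_i,\ell_j)=[\ell_i\ell_\aaa^{d-2}\ell_j]=(d-2)!\,(\ell_i\ell_j f)(\aaa)$ exactly as in the proof of Lemma \ref{prop:SLPHRR}.

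First I would invoke the hypothesis that $\ell_1$ and $\ell_2$ are $\R$-linearly independent in $R_f^1$, so that $V$ is genuinely $2$-dimensional and the Gram matrix $G$ faithfully represents the restriction $-Q_{\ell_\aaa}^1|_{V\times V}$. Next, by the interlacing of eigenvalues (equivalently, by Sylvester's law of inertia applied to a subspace), since the ambient form $-Q_{\ell_\aaa}^1$ has exactly one positive eigenvalue, its restriction to any subspace has at most one positive eigenvalue; hence $G$ has at most one positive eigenvalue. On the other hand, the $(1,1)$-entry of $G$ is $(\ell_1\ell_1 f)(\aaa)>0$ by assumption, so $G$ has at least one positive eigenvalue. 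Therefore $G$ has exactly one positive eigenvalue, and since it is a $2\times 2$ symmetric matrix, its other eigenvalue is negative or zero, giving $\det G\le 0$.

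It remains to rule out $\det G=0$, i.e.\ to upgrade the weak inequality to a strict one. Here I would use the non-degeneracy half of the $\HRR_1$: the form $-Q_{\ell_\aaa}^1$ is non-degenerate on all of $R_f^1$, but that alone does not immediately force non-degeneracy on $V$. Instead, observe that if $\det G=0$, then some nonzero $\ell=\alpha\ell_1+\beta\ell_2\in V$ satisfies $G\cdot{}^t(\alpha,\beta)=0$, i.e.\ $(\ell\ell_1 f)(\aaa)=(\ell\ell_2 f)(\aaa)=0$, hence $(\ell\ell f)(\aaa)=0$ and also $-Q_{\ell_\aaa}^1(\ell,\ell')=0$ for every $\ell'\in V$. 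Combined with $G$ having a positive eigenvalue realized (after diagonalizing) by a vector $w\in V$ with $-Q_{\ell_\aaa}^1(w,w)>0$, the vectors $\ell$ and $w$ are $Q$-orthogonal and $\ell$ is $Q$-isotropic; but in a space where $-Q_{\ell_\aaa}^1$ has signature $(+,-,\dots,-)$, any isotropic vector must pair nontrivially with the unique positive direction — more precisely, $\Span\{\ell,w\}$ would be a $2$-dimensional subspace on which the form is positive semidefinite of rank $1$, which is permitted, so this argument needs the non-degeneracy input more carefully. The cleaner route, which I expect to be the main obstacle to state crisply, is this: decompose $R_f^1 = \R\ell_\aaa \oplus \Ker(\psi_\aaa)$ orthogonally as in the proof of Lemma \ref{The Hessian criterion}, where $-Q_{\ell_\aaa}^1$ is positive on the first summand and negative definite on the second; then $G=0$ forces $\ell$ to lie in $\Ker(\psi_\aaa)$ (as it is $Q$-isotropic and $Q$-orthogonal to a positive vector in $V$, hence has no $\ell_\aaa$-component), where the form is negative definite, contradicting $\ell\ne 0$ and $-Q_{\ell_\aaa}^1(\ell,\ell)=0$. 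This yields $\det G<0$, which after dividing out the positive factor $(d-2)!$ (which does not affect the sign of the $2\times 2$ determinant up to the positive square $((d-2)!)^2$) gives the claimed strict inequality. The only delicate point, and where I would be most careful, is handling the possibility that $\ell_1$ or $\ell_2$ has a nonzero $\ell_\aaa$-component while still producing a genuine isotropic vector in the negative-definite part.
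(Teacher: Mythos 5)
Your setup is the same as the paper's: both proofs restrict the bilinear form $Q(\xi_1,\xi_2)=(\xi_1\xi_2\ell_\aaa^{d-2})f$, which has signature $(+,-,\dots,-)$ on $R_f^1$ by the $\HRR_1$, to the two\-/dimensional subspace $W=\mathrm{span}_\R\{\ell_1,\ell_2\}$, and both correctly deduce that the Gram matrix $G$ has at most one positive eigenvalue together with at least one (from $G_{11}>0$), hence $\det G\le 0$. The genuine gap is in the strictness, i.e.\ ruling out $\det G=0$ --- which is the entire content of the lemma and of the paper. Your key assertion, that a nonzero radical vector $\ell$ of $Q|_W$ ``has no $\ell_\aaa$-component'' because it is isotropic and $Q$-orthogonal to a positive vector of $W$, is stated parenthetically and never proved; it is not immediate, since writing $\ell=c\ell_\aaa+v$ with $v\in\Ker\psi_\aaa$, the relations $Q(\ell,\ell)=0$ and $Q(\ell,\ell_1)=0$ are a priori compatible with $c\ne 0$ and $Q(v,v)=-c^2\,Q(\ell_\aaa,\ell_\aaa)<0$, and eliminating this possibility requires an extra argument (e.g.\ Cauchy--Schwarz for $-Q$ on the negative definite summand, applied to the $\Ker\psi_\aaa$-components of $\ell$ and $\ell_1$). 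You flag this yourself in your closing sentence, so as written the proof does not close.

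The step is easily repaired, in either of two ways. (a) Avoid the $\ell_\aaa$-decomposition altogether: since $Q$ is nondegenerate of signature $(+,-,\dots,-)$ and $Q(\ell_1,\ell_1)>0$, the $Q$-orthogonal complement of $\ell_1$ in $R_f^1$ is negative definite; a radical vector $\ell$ of $Q|_W$ lies in that complement and satisfies $Q(\ell,\ell)=0$, forcing $\ell=0$ and contradicting $\dim_\R W=2$. (b) Do what the paper does: by Cauchy's eigenvalue interlacing theorem the smaller eigenvalue of $Q|_W$ is bounded above by the second-largest eigenvalue of $Q$, which is strictly negative, so the only possible signatures of $Q|_W$ are $(+,-)$, $(0,-)$ and $(-,-)$; the hypothesis $Q(\ell_1,\ell_1)>0$ then leaves only $(+,-)$, and $\det G<0$ follows in one stroke. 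Note that the paper's route handles the weak inequality and the strictness simultaneously, which is exactly the efficiency your two-stage argument loses at the degenerate case.
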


\begin{proof}
Consider the bilinear form
$$Q:R_f^1 \times R_f^1 \to \R, \ \ \ \ (\xi_1,\xi_2) \mapsto \xi_1\xi_2 \ell_{\aaa}^{d-2}  \cdot f,$$
where $d=\deg f$.
Observe that the subspace $W=\mathrm{span}_\R \{\ell_1,\ell_2\} \subset R_f^1$ has $\R$-dimension $2$ by the assumption.
Since $f$ has the $\HRR_1$ w.r.t.\ $\ell_\aaa$,
the bilinear form $Q$ has signature $(+,-,\dots,-)$,
so the restriction of $Q$ to $W$ has signature $(+,-),(0,-)$, or $(-,-)$ by Cauchy's interlacing theorem (see \cite[Lemma 2.4]{AOV}).
Since $Q(\ell_1,\ell_1)=(d-2)! (\ell_1 \ell_1 f)(\aaa)>0$, the latter two cases cannot occur.
Then, since the determinant in the statement is a representation matrix of the bilinear form $Q|_W:W \times W \to \R$ (up to a positive scalar multiplication), it must be negative as $Q|_W$ has signature $(+,-)$.
\end{proof}

\begin{lemma}
\label{technical2}
Let $M$ be a matroid on $[n]$ of rank $r \geq 2$
and $\aaa=(a_1,\dots,a_n) \in R_{>0}^n$.
\begin{itemize}
\item[(i)] Let $i,j \in [n]$ be non-loops and assume $\dim_\R R_{f_M}^1 \geq 3$.
If $i$ and $j$ are not parallel, then
$\ell_\aaa,\partial_i,\partial_j$ are $\R$-linearly independent in $R_{f_M}$.
\item[(ii)] If $M \ne U_{r,n}$ then $\partial_0,\ell_\aaa$ are $\R$-linearly independent in $R_{\overline P_M}$.
\end{itemize}
\end{lemma}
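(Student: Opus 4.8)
The plan is to make both statements concrete by describing the degree‑$1$ part of the relevant Poincar\'e duality algebra explicitly, so that the independences can be read off, using only Lemma~\ref{lem:basicproperty} and Theorem~\ref{thm:LD}.

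First I would settle (i) by exhibiting a basis of $R_{f_M}^1$ indexed by the parallel classes. By Lemma~\ref{lem:basicproperty}(i),(iii), in $R_{f_M}^1$ one has $\partial_i=0$ whenever $i$ is a loop and $\partial_i=\partial_{i'}$ whenever $i,i'$ lie in a common parallel class, so $R_{f_M}^1$ is spanned by $\partial_{i_1},\dots,\partial_{i_s}$, where $E_1,\dots,E_s$ are the parallel classes of $M$ and $i_k\in E_k$. These are moreover $\R$-linearly independent: writing $f_M=f_{\overline M}(y_1,\dots,y_s)$ with $y_k=\sum_{i\in E_k}x_i$ as in Lemma~\ref{lem:basicproperty}(iii), the chain rule gives $\partial_{i_k}f_M=(\partial_kf_{\overline M})(y_1,\dots,y_s)$, and since $\overline M$ is simple of rank $r\geq 2$, Theorem~\ref{thm:LD}(i) makes $\partial_1f_{\overline M},\dots,\partial_sf_{\overline M}$ linearly independent, which transfers to the $\partial_{i_k}f_M$ because $y_1,\dots,y_s$ are algebraically independent. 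Hence $\dim_\R R_{f_M}^1=s$, and in the basis $\{\partial_{i_k}\}$ we have $\ell_\aaa=\sum_{k=1}^s A_k\partial_{i_k}$ with $A_k=\sum_{i\in E_k}a_i>0$, while $\partial_i,\partial_j$ are two distinct basis vectors (here $i,j$ are non-loops that are not parallel). If $\ell_\aaa$ were an $\R$-combination of $\partial_i$ and $\partial_j$, then $A_m=0$ for every parallel class $E_m$ other than those of $i$ and $j$; but $s=\dim_\R R_{f_M}^1\geq 3$ provides such a class, a contradiction. So $\ell_\aaa,\partial_i,\partial_j$ are linearly independent.

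For (ii) I would split on whether $M$ is simple. If $M$ is simple, then $M\neq U_{r,n}$ by hypothesis, so Theorem~\ref{thm:LD}(ii) already gives that $\partial_0,\partial_1,\dots,\partial_n$ are $\R$-linearly independent in $R_{\overline P_M}$; since $\ell_\aaa=\sum_ia_i\partial_i$ has all coefficients nonzero, $\partial_0$ and $\ell_\aaa$ are independent a fortiori. If $M$ is not simple, then automatically $M\neq U_{r,n}$ (a uniform matroid of rank $\geq2$ is simple), and I would argue by contradiction: suppose $c_0\partial_0+c_1\ell_\aaa=0$ in $R_{\overline P_M}^1$ with $(c_0,c_1)\neq(0,0)$. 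If $c_1=0$ then $c_0\partial_0\overline P_M=0$, impossible since the coefficient of $x_0^{r-1}$ in $\partial_0\overline P_M$ is $\frac{n!}{(r-1)!}\neq0$. If $c_1\neq0$, rescale to obtain $(c_0\partial_0+\sum_ia_i\partial_i)\overline P_M=0$; dropping the vanishing loop terms and writing $\overline P_M=\sum_{k=0}^r\frac{(n-k)!}{(r-k)!}x_0^{r-k}f_k$ with $f_k=\sum_{I\in\calI(M),\,|I|=k}\prod_{i\in I}x_i$, comparison of the coefficients of $x_0^{r-1}$ and of $x_0^{r-2}$ (using $r\geq2$, $f_1=\sum_{i\text{ non-loop}}x_i$ and $f_2=\sum_{\{i,j\}\in\calI(M)}x_ix_j$) yields $nc_0+S=0$ and, for every non-loop $j$, $(n-1)c_0+S-A(j)=0$, where $S=\sum_{i\text{ non-loop}}a_i$ and $A(j)=\sum_{i\in E(j)}a_i$ is the weight of the parallel class $E(j)$ of $j$. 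Thus all parallel classes carry the same positive weight $A$; with $s$ the number of parallel classes this gives $S=sA$, and eliminating $c_0$ forces $A=sA/n$, i.e.\ $s=n$, i.e.\ $M$ is simple --- contradicting our assumption. Hence no nontrivial relation exists.

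The routine part is (i): once the parallel-class basis of $R_{f_M}^1$ is identified, the positivity of the coordinates $A_k$ of $\ell_\aaa$ does the rest. The main obstacle is the non-simple case of (ii): one has to justify carefully that the coefficient of $x_j$ in $\ell_\aaa f_2$ is exactly the sum of the $a_i$ over non-loops $i$ outside the parallel class of $j$, and then push the resulting identity ``every parallel class is equally weighted'' through to the conclusion $s=n$. A smaller point to check, in (i), is that passing to the simplification $\overline M$ is harmless: $\overline M$ still has rank $r\geq2$, and the algebraic independence of $y_1,\dots,y_s$ lets linear independence over $\R[y_1,\dots,y_s]$ descend to $\R[x_1,\dots,x_n]$.
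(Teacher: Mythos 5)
Your proof is correct. Part (i) is essentially the paper's argument in different clothing: the paper describes the kernel of $S^1\to R_{f_M}^1$ (spanned by $\partial_k$ for loops $k$ and $\partial_k-\partial_{k'}$ for parallel $k,k'$, via Theorem \ref{thm:LD}(i) and Lemma \ref{lem:basicproperty}(iii)) and checks that it meets $\mathrm{span}_\R\{\ell_\aaa,\partial_i,\partial_j\}$ trivially, whereas you describe the quotient $R_{f_M}^1$ by a basis indexed by parallel classes and read off the coordinates of $\ell_\aaa$; these are the same computation. For part (ii) the paper only says ``the proof is similar,'' and this is where your write-up genuinely adds something: Theorem \ref{thm:LD}(ii) is stated only for simple matroids, so the non-simple case (which is actually needed later, since the lemma is applied to truncations $T^{r-k-1}M$ that need not be simple) requires a separate argument. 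Your direct comparison of the coefficients of $x_0^{r-1}$ and $x_0^{r-2}$, leading to ``all parallel classes have equal weight $A=S/n$'' and hence $s=n$, handles this cleanly and self-containedly; in effect you are re-deriving, in the special case you need, what the paper later extracts from Lemma \ref{lem:QQQ}. The one point worth making explicit when writing this up is that in (i) the independence of the three classes reduces to the case $c_0\ne 0$ because $\partial_i,\partial_j$ are distinct basis vectors, and in (ii) that $A>0$ (being a sum of positive $a_i$ over a nonempty class) is what lets you cancel it in $A=sA/n$; both are implicit in your argument and both are fine.
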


\begin{proof}
By Theorem \ref{thm:LD} and Lemma \ref{lem:basicproperty}(iii),
the $\R$-vector space $\{\ell_{\aaa}\in S \mid \ell_a f_M=0\}$ is generated by
$$\{ \partial_k -\partial_{k'} \mid \mbox{$k$ and $k'$ are parallel in $M$} \} \cup \{ \partial_k\mid \mbox{$k$ is a loop of $M$}\}.$$
This vector space has the trivial intersection with the subspace $\mathrm{span}_\R\{\ell_\aaa,\partial_i,\partial_j\} \subset S$,
which guarantees (i).
The proof for (ii) is similar.
\end{proof}

Note that $\dim_\R R_{f_M}^1$ equals to the number of parallel classes of $M$,

We now prove Corollary \ref{cor:main}.
It is the special case of the following statement when $\aaa=(1,1,\dots,1)$.

\begin{theorem}
Let $M$ be a matroid on $[n]$ of rank $r \geq 2$, $i,j \in [n]$, and let $f_k=\sum_{I \in \calI(M),\ |I|=k} (\prod_{i \in I} x_i)$ for $k=0,1,2,\dots,r$.
\begin{itemize}
\item[(i)] If $i$ and $j$ are non-loops and $M$ has at least three parallel classes,
then for any $\aaa \in \R^n_{>0}$ one has
$$\big(f_M(\aaa)\big) \times \big(\partial_i\partial_j f_M(\aaa)\big)
< 2\left(1-\frac 1 r\right) \big(\partial_i f_M(\aaa)\big) \times \big(\partial_j f_M(\aaa)\big).$$
\item[(ii)]
For any $\aaa \in \R^n_{>0}$ and $k+1 \geq \mathrm{girth}(M)$, one has
$$ \frac {f_{k-1}(\aaa)} { {n \choose k-1}}
\frac {f_{k+1}(\aaa)} { {n \choose k+1}}
<
\left(\frac {f_{k}(\aaa)} { {n \choose k}}\right)^2.$$
\end{itemize}
\end{theorem}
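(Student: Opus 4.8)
The plan is to derive both strict inequalities from the strict Hodge--Riemann relation ($\HRR_1$) established in Theorem~\ref{thm:1-1}, via the two-by-two determinant criterion of Lemma~\ref{lem:technical1}. For part (i), I first pass to the simplification. Note that $f_M(\aaa)$, $\partial_i f_M(\aaa)$, $\partial_j f_M(\aaa)$, $\partial_i\partial_j f_M(\aaa)$ are unchanged, up to the substitution $x_k \mapsto \sum_{k' \sim k} x_{k'}$ from Lemma~\ref{lem:basicproperty}(iii), under replacing $M$ by $\overline M$ and $\aaa$ by a suitable positive point; since $M$ has at least three parallel classes, $\overline M$ is a simple matroid on $[s]$ with $s \geq 3$, and $\dim_\R R^1_{f_{\overline M}} = s \geq 3$. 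Now I apply Lemma~\ref{lem:technical1} to $f = f_{\overline M}$ with $\ell_1 = \partial_i$, $\ell_2 = \partial_j$: by Lemma~\ref{technical2}(i) these are $\R$-linearly independent in $R^1_{f_M}$ (using that $i,j$ are non-parallel non-loops and $\dim_\R R^1_{f_M}\ge 3$), and $(\partial_i\partial_i f_M)(\aaa) = 0$ since $i$ appears in no basis squared — wait, one needs $(\ell_1\ell_1 f)(\aaa)>0$, so instead I take $\ell_1 = \ell_\aaa$ and $\ell_2$ an appropriate combination. Concretely, set $\ell_1 = \ell_\aaa$ and use the triple $\ell_\aaa, \partial_i, \partial_j$, which are $\R$-linearly independent in $R_{f_M}$ by Lemma~\ref{technical2}(i); restricting the Hodge--Riemann form to their span and using Cauchy interlacing as in Lemma~\ref{lem:technical1} forces a $3\times 3$ Gram determinant to have the sign pattern of a form of signature $(+,-,-)$, which is negative. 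Expanding this $3\times3$ determinant in terms of the quantities $f_M(\aaa)=\tfrac{1}{r(r-1)}(\ell_\aaa\ell_\aaa f_M)(\aaa)$, $(\partial_i f_M)(\aaa)$, $(\partial_j f_M)(\aaa)$, $(\partial_i\partial_j f_M)(\aaa)$, and using the Euler-type identities $\ell_\aaa \partial_i f_M = (r-1)\,\partial_i f_M$ evaluated at $\aaa$ (since $\partial_i f_M = f_{M/i}$ is homogeneous of degree $r-1$), yields exactly the stated strict inequality.

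For part (ii), the argument runs through $\overline P_M$ rather than $f_M$. I may assume $M$ is loopless, and the condition $k+1 \geq \girth(M)$ means that $M$ is not $U_{r,n}$ when restricted to the relevant degrees, or more precisely that at least one of $f_{k-1}, f_k, f_{k+1}$ fails to equal its uniform-matroid value — this is what will make the inequality strict rather than an equality. The key is to identify $f_{k-1}(\aaa), f_k(\aaa), f_{k+1}(\aaa)$ (up to binomial normalizing factors) with entries of a $2\times 2$ Gram matrix of the Hodge--Riemann form on $R_{\overline P_M}$, evaluated using a linear form $\ell$ built from $\ell_\aaa$ and $\partial_0$. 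Since $\partial_0^j \overline P_M$ relates the coefficient of $x_0^{r-k}$ in $\overline P_M$ to $f_k$, iterating $\partial_0$ and $\ell_\aaa$ produces the quantities $f_{k-1}(\aaa), f_k(\aaa), f_{k+1}(\aaa)$ as $[\partial_0^{a}\ell_\aaa^{b} \overline P_M]$ for appropriate $a,b$. By Theorem~\ref{thm:main}, $\overline P_M$ has the $\HRR_1$ w.r.t.\ $\ell_\aaa$ for $\aaa\in\R^n_{>0}$; but I actually need the Hodge--Riemann property of an intermediate power or of a contraction $\partial_0^{r-k-1}\overline P_M = \overline P_{T^{r-k-1}M}$, which is again of the form $\overline P_{M'}$ for a loopless matroid $M'$ of rank $k+1 \geq 2$, so Theorem~\ref{thm:main} applies to it. Then Lemma~\ref{lem:technical1} applied to $f = \overline P_{M'}$ with $\ell_1 = \partial_0$ (or $\ell_\aaa$) and $\ell_2$ the other one gives the negative $2\times 2$ determinant, which unwinds to $f_{k-1}(\aaa)f_{k+1}(\aaa) < f_k(\aaa)^2$ after dividing by the binomial factors; the $\R$-linear independence needed to invoke Lemma~\ref{lem:technical1} comes from Lemma~\ref{technical2}(ii), using $M' \neq U_{k+1, m}$, which is precisely guaranteed by $k+1 \geq \girth(M)$.

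The main obstacle I anticipate is bookkeeping: getting the binomial coefficients and the combinatorial factors $\tfrac{(n-k)!}{(r-k)!}$ from $\overline P_M$ to line up exactly with $\tfrac{1}{\binom{n}{k}}$ in the statement, and verifying that the contraction/truncation $M' = T^{r-k-1}M$ really does have rank $k+1\ge 2$ and is not uniform exactly when $k+1\ge\girth(M)$. (If $k+1 < \girth(M)$ one is in the equality case of Corollary~\ref{cor:main}, so the threshold is sharp.) One must also check the hypothesis $(\ell_1\ell_1 f)(\aaa)>0$ in each application of Lemma~\ref{lem:technical1}, which holds because $f_M(\aaa)>0$ and $f_k(\aaa)>0$ for positive $\aaa$ whenever the relevant polynomial is nonzero, and the nonzero-ness is exactly the range $k \le r$ resp.\ the non-uniform hypothesis. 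Once the dictionary between Gram entries and the combinatorial quantities is set up carefully, both inequalities drop out of the signature $(+,-,\dots,-)$ via Cauchy interlacing, with strictness coming for free from the strictness in $\HRR_1$.
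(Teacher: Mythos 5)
Your part (ii) is essentially the paper's own argument: truncate to $M'=T^{r-k-1}M$, observe that $k+1\geq\girth(M)$ is exactly the condition $M'\neq U_{k+1,n}$ needed for Lemma~\ref{technical2}(ii), and feed the $2\times 2$ Gram matrix of $\ell_\aaa$ and $\partial_0$ at the point $(0,a_1,\dots,a_n)$ into Lemma~\ref{lem:technical1}, using Theorem~\ref{thm:main} for the $\HRR_1$ of $\overline P_{M'}$ with respect to $\ell_\aaa$. The bookkeeping you defer does work out, and this is precisely what the paper does.

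Part (i), however, has a genuine gap. Up to a positive scalar, your $3\times 3$ Gram matrix of $\ell_\aaa,\partial_i,\partial_j$ for the form $Q(\xi_1,\xi_2)=[\xi_1\xi_2\ell_\aaa^{\,r-2}]$ is
\[
\begin{pmatrix} r(r-1)f & (r-1)f_i & (r-1)f_j\\ (r-1)f_i & 0 & f_{ij}\\ (r-1)f_j & f_{ij} & 0\end{pmatrix},
\qquad
\det=(r-1)f_{ij}\bigl(2(r-1)f_if_j-rff_{ij}\bigr),
\]
where $f=f_M(\aaa)$, $f_i=(\partial_if_M)(\aaa)$, etc., and the zero diagonal entries come from $\partial_i^2f_M=0$. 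Two problems. First, the determinant of a form of signature $(+,-,-)$ is \emph{positive}, not negative as you assert; pushing your claimed negative determinant through the expansion above would yield the reverse of the desired inequality. Second, Cauchy interlacing only bounds the number of positive eigenvalues of the restriction; it does not exclude a zero eigenvalue, and a degenerate restriction gives $\det=0$, i.e.\ only the weak inequality --- which is exactly the equality case you must rule out. To close the gap you need nondegeneracy of the restriction, e.g.\ by writing $\partial_i,\partial_j$ as multiples of $\ell_\aaa$ plus vectors in the $Q$-orthogonal complement of $\ell_\aaa$ and using that this complement is negative definite (this uses the full signature statement of $\HRR_1$, not merely interlacing); then $\det>0$ together with $f_{ij}>0$ (valid because $\{i,j\}$ is independent for non-parallel non-loops) gives the claim. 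The paper sidesteps all of this with a cleaner device: it applies the $2\times 2$ Lemma~\ref{lem:technical1} to the pencil $\ell_1=\ell_\aaa$, $\ell_2=\partial_i+t\partial_j$, obtains a quadratic in $t$ that is negative for every $t$, and reads the strict inequality off its necessarily negative discriminant. You should also dispose of the case where $i$ and $j$ are parallel (then $\partial_i\partial_jf_M=0$ and the inequality is trivial) before invoking Lemma~\ref{technical2}(i); the detour through the simplification is unnecessary, since that lemma is stated for arbitrary matroids with at least three parallel classes.
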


We note that the non-strict inequalities are known and proved in
\cite{ALOV,BH1,BH2,HSW}. In particular,
our proof of the above theorem is based on the proofs of \cite[Corollary 6]{BH1} and \cite[Lemma 4.4]{BH2}.

\begin{proof}
We first prove (i).
If $i$ and $j$ are parallel in $M$, then $\partial_i\partial_j f_M=0$ so the assertion is obvious. We assume that $i$ and $j$ are not parallel.

To simplify the notation, we write
$$f=f_M(\aaa),\ f_i=(\partial_i \cdot f_M)(\aaa),\ f_j=(\partial_j \cdot f_M)(\aaa)\mbox{ and } f_{ij}=(\partial_i \partial_j \cdot f_M)(\aaa).$$
By Euler's identity, we have
$$(\ell_\aaa^2 \cdot f_M)(\aaa)=r(r-1)f,\ (\ell_a \partial_i \cdot f_M)(\aaa)=(r-1)f_i \mbox{ and }(\ell_a \partial_j \cdot f_M)(\aaa)=(r-1)f_j.$$
Then, for any $t \in \R$, we have
\begin{align}
\nonumber
& \frac 1 {r-1} \det \begin{pmatrix}
\big(\ell_\aaa^2 \cdot f_M\big)(\aaa) & \big(\ell_\aaa (\partial_i+t\partial_j) \cdot f_M\big)(\aaa)\\
 \big(\ell_\aaa (\partial_i+t\partial_j) \cdot f_M \big)(\aaa) & \big((\partial_i+t\partial_j)^2\cdot f_M \big) (\aaa)
\end{pmatrix}\\
\nonumber
&= \frac 1 {r-1} \det \begin{pmatrix}
r(r-1)f & (r-1) (f_i+tf_j)\\
  (r-1) (f_i+tf_j) & 2t f_{ij}
\end{pmatrix}\\
&=-(r-1)f_j^2 t^2 +2(rff_{ij}-(r-1)f_if_j)t-(r-1)f_i^2. \label{det1}
\end{align} 
The discriminant of the quadratic polynomial \eqref{det1} in $t$ is
\begin{align}
\label{disc1}
(rff_{ij}-(r-1)f_if_j)^2-(r-1)^2f_i^2f_j^2
= rf_if_j(rff_{ij}-2(r-1)f_if_j).
\end{align}
Since $\ell_\aaa$ and $\partial_i+t\partial_j$ are $\R$-linearly independent in $R_{f_M}$ by Lemma \ref{technical2}, the determinant in \eqref{det1} is negative for all $t \in \R$ by Lemma \ref{lem:technical1}. This tells that the discriminant \eqref{disc1} must be negative. Hence we have $rff_{ij}<2(r-1)f_if_j$, as desired.

(ii) 
Let $M'=T^{r-k-1}M$.
Since $k+1 \geq \mathrm{girth}(M)$,
$M' \ne U_{r,n}$.
Then, since
$$
\textstyle \overline P_{M'}= (n-k-1)! f_{k+1} + (n-k)! f_{k} \cdot x_0 + \frac {(n-k+1)!} 2 f_{k-1}\cdot x_0^2 + \cdots,$$
we have
\begin{align*}
&\det \begin{pmatrix}
\big ( \ell_\aaa^2 \overline P_{M'} \big) (0,a_1,\dots,a_n) & \big( \ell_\aaa \partial_0 \cdot \overline P_{M'}\big) (0,a_1,\dots,a_n)\\
\big( \ell_\aaa \partial_0 \cdot \overline P_{M'}\big) (0,a_1,\dots,a_n) & \big(\partial_0^2 \cdot \overline P_{M'} \big) (0,a_1,\dots,a_n)
\end{pmatrix}\\
&=
\det \begin{pmatrix}
(n-k-1)! \big(\ell_\aaa^2 f_{k+1} \big)(\aaa) & (n-k)! \big(\ell_\aaa\cdot f_k\big)(\aaa)\\
(n-k)! \big(\ell_\aaa f_k\big)(\aaa) & (n-k+1)! f_{k-1}(\aaa)
\end{pmatrix}\\
&
=(n-k)!(n-k-1)! \det \begin{pmatrix}
(k+1)k f_{k+1}(\aaa) & (n-k) k f_k(\aaa)\\
(n-k) k f_k(\aaa) (\aaa) & (n-k+1) f_{k-1}(\aaa) \end{pmatrix}\\
&=k(n-k)! (n-k-1)! \big\{ (n-k+1)(k+1) f_{k+1}(\aaa)f_{k-1}(\aaa)-(n-k)k (f_k(\aaa))^2\big\}.
\end{align*}
Recall that $\overline P_{M'}$ has the $\HRR_1$ w.r.t.\ $\ell_\aaa=a_1 \partial_1+ \cdots + a_n \partial_n$ by Theorem \ref{thm:main}.
Then the above determinant must be negative by Lemmas \ref{lem:technical1} and \ref{technical2}. Hence we have
$$(n-k+1)(k+1) f_{k+1}(\aaa)f_{k-1}(\aaa)-(n-k)k (f_k(\aaa))^2<0.$$
It is easy to see that this inequality is the same as the desired inequality.
\end{proof}

\section{Morphism of matroids}\label{sec:morphism}

Recently, Eur--Huh \cite{EH} extend the Lorentzian property of $f_M$ and $P_M$ to basis generating polynomials of morphisms of matroids.
In this section, we generalize Theorem \ref{thm:LD} to morphisms of matroids.
Note that, by Theorem \ref{thm:LorentzianHRR}, this partially generalize Theorem \ref{thm:1-1}.

\begin{definition}
\label{def:morphism}
Let $M$ be a matroid on $[n]$ of rank $r$ and $N$ a matroid of rank $r'$.
A {\it morphism} $\vphi : M \to N$ is a map between the underlying space  satisfying the following equivalent conditions:  
\begin{itemize}
\item[(i)] For any $S_1\subset S_2\subset [n]$, we have
\[\rank_N(\vphi(S_2))-\rank_N(\vphi(S_1))\leq \rank_M S_2-\rank_M S_1.\] 
\item[(ii)] For any flat $F$ of $N$, $\vphi^{-1}(F)$ is a flat of $M$.  
\end{itemize}
\end{definition}

We refer the readers to \cite{EH} for basic properties and typical instances of morphisms of matroids.

Let $\vphi : M \to N$ be as in Definition \ref{def:morphism}.
A subset $I\subset [n]$ is a {\it basis} of $\vphi$ if $I$ is an independent set of $M$ and $\langle \vphi(I) \rangle $ equals to the ground set of $N$, equivalently, $\rank(\varphi(I)) = \rank N$. We write $\calB(\vphi)$ for the set of bases of $\vphi$. Also, for $k \geq 0$, we write $\calB(\vphi)_k=\{I \in \calB(\varphi) \mid |I|=k\}$.
We define the {\it basis generating polynomial} $P_{\vphi}$  of $\vphi$ as 
\[P_{\vphi}:=\sum_{I \in \calB(\vphi)} x_0^{n-|I|} \left(\prod_{i \in I} x_i \right).\]
Also, we call
$$\overline P_{\vphi}= \partial_0^{n-r} P_\vphi$$
the {\it reduced basis generating polynomial} of $\vphi$, where $r=\mathrm{rank}(M)$.
Below we give a few remarks on $\calB(\varphi)$ and $P_\varphi$.

\begin{remark}\label{rem:quotient}
Let $\varphi$ be as above. 
\begin{itemize}
\item $P_\varphi$ is non-trivial only when $\varphi([n])$ has rank $r'$ in $N$.
We assume this throughout the paper.
\item
$\calB(\varphi)=\bigsqcup_{k=r'}^r \calB(\varphi)_k$
and $\calB(\varphi)_r=\calB(M)$.
Also, $([n],\calB(\varphi)_k)$ is a matroid for any $r'\leq k \leq r $
(see the remark at the end of \cite[section 2]{EH}).
\item When $r=r'$, then $P_{\vphi}=x_0^{n-r}f_{M}$ and $\overline{P}_{\vphi}=(n-r)! f_{M}$. 
Also, if $N=U_{0,1}$, then we have $P_\vphi=P_{M}$ and $\overline{P}_{\vphi}=\overline{P}_{M}$. From this viewpoint, basis generating polynomials of morphisms can be seen as a generalization of basis generating polynomials and independent set generating polynomials. 
\end{itemize}
\end{remark}

Let $M$ be a matroid on $[n]$.
For any morphism $\vphi : M \to N$, we say that two elements $i$ and $j$ in $[n]$ are {\it $\vphi$-parallel} if $\vphi(i)$ and $\vphi(j)$ are parallel in $N$.
We define $\varphi$-parallel classes in the same way as usual parallel classes.
Also $i \in[n]$ is said to be a {\it $\vphi$-loop} if $\vphi(i)$ is a loop of $N$. We set $L_\vphi:=\{ i \in [n] \mid \mbox{ $i$ is a $\varphi$-loop}\}.$

By \cite[Corollary 22]{EH},
$P_\vphi$ is a Lorentzian polynomial.
Thus its Hessian matrix has signature $(+,-,\dots-)$ when $\partial_0\overline P_\varphi,\partial_1 \overline P_\varphi,\dots,\partial_n \overline P_\varphi$ are 
$\R$-linearly independent.
As the next example shows, this linear independency does not hold for all morphisms.

\begin{example}
\label{exam:linearlydependent}
Let $\varphi:M \to N$ be as in Definition \ref{def:morphism}.
\begin{itemize}
\item[(1)]
If $i$ is a loop of $M$, then $\partial_i P_\varphi=0$.
Similarly, 
if $i$ and $j$ are parallel in $M$,  then $(\partial_i-\partial_j)P_\varphi=0$.
\item[(2)]
If $r= r'$, then $P_\varphi=f_M$.
In this case, $\partial_0 \overline P_\varphi=0$ since $\overline P_\varphi =(n-r)! f_M$ does not contain $x_0$.
\item[(3)]
Suppose that $r-r'=1$ and $L_\varphi=\{1\}$.
Then it is not hard to see
$$P_\varphi=x_0^{n-r}(x_0+x_1) \sum_{I \in \calB(\varphi)_{r'}} \left(\prod_{i\in I} x_i \right ) + x_0^{n-r} \sum_{ 1 \not \in I \in \calB(\varphi)_{r}}  \left(\prod _{i\in I} x_i\right).$$
In this case, $(\partial_0-(n-r+1)\partial_1)\overline P_\vphi=0$.
\item[(4)]
Suppose that $M|_{L_\varphi}$ is a uniform matroid on $ L_\varphi$ and $|[n] \setminus L_\varphi|=r'$.
Then it is not difficult to see
$$\textstyle P_\varphi = P_{M |_{L_\varphi}} \times \left (\prod_{i \in [n] \setminus L_\varphi} x_i \right).$$
(See also Lemma \ref{lem:4.1} below). In this case, $(-\partial_0+ \sum_{i \in L_\varphi} \partial_i) P_{\varphi}=0$.

Here is an instance of such a morphism.
Consider the morphism $\varphi : U_{r-r',n-r'}\bigoplus U_{r',r'} \to N=U_{0,1} \bigoplus U_{r',r'}$ which send elements in $U_{r,n}$ to the loop of $N$ (i.e.\ the element of $U_{0,1}$) and whose restriction to $U_{r',r'}$ is an isomorphism.
This map is indeed a morphism of matroids and satisfies the above condition.
\end{itemize}
\end{example}

We will prove that these are the only cases that the linear dependency of the polynomial $\partial_0 \overline P_\varphi,\dots,\partial_n \overline P_\varphi$ occurs.
For the proof, we need the following lemmas.

\begin{lemma}
\label{lem:4.1}
Let $\varphi: M \to N$ be a morphism of matroids
and $I \in \calB(\varphi)_{\rank(N)}$.
Then $I \cap L_\varphi= \emptyset$ and, for any $J \subset L_\varphi$, one has $I \cup J \in \calB(\varphi)$ if and only if $J \in \calI(M|_{L_\varphi})$.
\end{lemma}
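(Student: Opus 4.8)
The plan is to derive both assertions directly from the rank characterization of a morphism in Definition \ref{def:morphism}(i), together with the basic inequality $\rank(S)\le |S|$ valid in any matroid and the fact that adjoining loops to a subset of $N$ does not change its rank.

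First I would settle $I\cap L_\varphi=\emptyset$ by a cardinality count. If some $i\in I$ were a $\varphi$-loop, then $\varphi(i)$ is a loop of $N$, so $\rank_N(\varphi(I))=\rank_N(\varphi(I\setminus\{i\}))\le |I\setminus\{i\}|=\rank(N)-1$, contradicting $\rank_N(\varphi(I))=\rank(N)$, which holds because $I\in\calB(\varphi)_{\rank(N)}$. In particular $I$ is disjoint from every $J\subseteq L_\varphi$, so $|I\cup J|=|I|+|J|$.

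Next, for the equivalence fix $J\subseteq L_\varphi$. Since $\varphi(J)$ is a set of loops of $N$, we have $\rank_N(\varphi(I\cup J))=\rank_N(\varphi(I))=\rank(N)$, so the condition ``$\langle\varphi(I\cup J)\rangle$ is the ground set of $N$'' is automatic; hence $I\cup J\in\calB(\varphi)$ if and only if $I\cup J\in\calI(M)$. The forward direction is then immediate, since $J\subseteq I\cup J\in\calI(M)$ forces $J\in\calI(M)$, i.e.\ $J\in\calI(M|_{L_\varphi})$. For the converse, assume $J\in\calI(M|_{L_\varphi})$, so $J\in\calI(M)$ and $\rank_M(J)=|J|$. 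Applying Definition \ref{def:morphism}(i) to $S_1=J\subseteq S_2=I\cup J$, and using $\rank_N(\varphi(J))=0$ and $\rank_N(\varphi(I\cup J))=\rank(N)=|I|$, we get $|I|\le \rank_M(I\cup J)-|J|$, hence $\rank_M(I\cup J)\ge |I|+|J|=|I\cup J|$. As the opposite inequality is automatic, $I\cup J$ is independent in $M$, so $I\cup J\in\calB(\varphi)$.

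I expect the only step requiring genuine thought to be the converse implication, and specifically the choice of the pair $(S_1,S_2)=(J,\,I\cup J)$ in Definition \ref{def:morphism}(i): the naive choice $(I,\,I\cup J)$ yields $\rank_N(\varphi(I\cup J))-\rank_N(\varphi(I))=0$ and hence no information, whereas pairing $J$ against $I\cup J$ exploits that $\varphi$ collapses all of $J$ to rank $0$ while $I\cup J$ still maps onto a spanning set of $N$. Everything else is a matter of unwinding the definitions of $\calB(\varphi)_{\rank(N)}$, $L_\varphi$, and $M|_{L_\varphi}$.
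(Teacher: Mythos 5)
Your proof is correct and follows essentially the same route as the paper's: both rule out $\varphi$-loops in $I$ by the cardinality bound $\rank_N(\varphi(I))\le |I\setminus\{i\}|$, and both obtain the equivalence by applying the rank inequality of Definition \ref{def:morphism}(i) to the pair $(J,\,I\cup J)$ to get $\rank_M(I\cup J)\ge |I|+\rank_M(J)$, after noting that the spanning condition $\rank_N(\varphi(I\cup J))=\rank(N)$ is automatic. No gaps.
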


\begin{proof}
Let $I \in \calB(\varphi)_{\rank(N)}$. If $I$ contains a $\varphi$-loop $j$, then $\mathrm{rank}(N)=\mathrm{rank}(\varphi(I))=\mathrm{rank}(\varphi(I \setminus \{j\}))$, so 
$\rank(N) \leq |I \setminus \{j\}| <|I|$, contradicting $|I|=\mathrm{rank}(N)$.
Also, for any $J \subset L_\varphi$, since 
$$
\mathrm{rank}(I \cup J)-\rank(J) \geq \mathrm{rank}(\varphi(I \cup J))-\mathrm{rank} (\varphi(J))=|I|-0,$$
one has $I \cup J \in \calI(M)$ if and only if $\mathrm{rank}(J)=|J|$.
The first condition is equivalent to $I \cup J \in \calB(\varphi)$
since $\rank(\varphi(I \cup J))=\rank(\varphi(I))=\rank(N)$,
and the latter condition is equivalent to $J \in \calI(M|_{L_\varphi})$.
\end{proof}

\begin{lemma}
\label{lem:QQQ}
Let $m$ be a positive integer,
$M$ a simple matroid on $[n]$ of rank $r \geq 2$,
and $f=\partial^{n-r+m} (x_0^m P_M)$.
Then $\partial_0 f,\partial_1 f,\dots,\partial_n f$ are $\R$-linearly independent.
\end{lemma}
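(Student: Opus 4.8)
The statement is essentially a variant of Theorem~\ref{thm:LD}(ii), now applied to the polynomial $f = \partial_0^{\,n-r+m}(x_0^m P_M)$, and I would mimic that proof almost verbatim. First I would write $P_M = \sum_{k=0}^r x_0^{n-k} f_k$ with $f_k = \sum_{I\in\calI(M),\,|I|=k}\prod_{i\in I}x_i$ and $f_0=1$, so that
\[
x_0^m P_M = \sum_{k=0}^r x_0^{\,n+m-k} f_k,
\qquad
f = \partial_0^{\,n-r+m}\!\left(x_0^m P_M\right) = \sum_{k=0}^r \frac{(m+r-k)!}{(r-k)!}\, x_0^{\,r-k} f_k .
\]
(The factor $(m+r-k)!/(r-k)!$ is the falling factorial $(n+m-k)(n+m-k-1)\cdots(r-k+1)$ coming from differentiating $x_0$ repeatedly $n-r+m$ times, and it is nonzero for every $0\le k\le r$.) Note $f$ has degree $r$ in the variables $x_0,\dots,x_n$, and the coefficient of $x_0^{r-k}$ is a nonzero scalar times $f_k$.

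**Key step: run the linear-dependence computation.** Suppose $(a_0\partial_0 + a_1\partial_1 + \cdots + a_n\partial_n)f = 0$. Collecting the coefficient of $x_0^{r-k}$ (for $k = 1,\dots,r$) gives, exactly as in the proof of Theorem~\ref{thm:LD}(ii), a system
\[
c_k\, a_0\, f_{k-1} + c_k'\,(a_1\partial_1 + \cdots + a_n\partial_n) f_k = 0, \qquad k = 1,\dots,r,
\]
for suitable nonzero constants $c_k, c_k'$; dividing out gives $\lambda_k a_0 f_{k-1} + (a_1\partial_1+\cdots+a_n\partial_n)f_k = 0$ for positive rationals $\lambda_k$. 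Since $M$ is simple, $f_1 = \sum_{i=1}^n x_i$ and $f_2 = \sum_{i<j} x_i x_j$, so the $k=2$ equation reads $\sum_{k=1}^n\big(\lambda_2 a_0 + \sum_{j\ne k} a_j\big)x_k = 0$, hence $\sum_{j\ne k}a_j = -\lambda_2 a_0$ for all $k$, and Lemma~\ref{lem:linearalgebra} forces $a_1 = \cdots = a_n = -\frac{\lambda_2}{n-1}a_0$. If $a_0 = 0$ we are done, so assume $a_0\ne 0$. Substituting $x_1=\cdots=x_n=1$ into the remaining equations turns them into linear relations among the numbers $I_k(M) = f_k(1,\dots,1)$, of the form $\mu_k I_{k-1}(M) = \nu_k I_k(M)$ with $\mu_k,\nu_k>0$ depending only on $n,k,m$. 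This pins down each $I_k(M)$, $k=1,\dots,r$, to a fixed value determined by $n$, $m$, and $r$.

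**The main obstacle, and how to handle it.** The genuinely new point compared with Theorem~\ref{thm:LD}(ii) is that, because of the extra factor $x_0^m$, the forced values of $I_k(M)$ are \emph{not} $\binom{n}{k}$ in general, so I cannot immediately conclude $M = U_{r,n}$. The way out is to observe that the forced relation $\mu_k I_{k-1}(M) = \nu_k I_k(M)$ must in particular be consistent with the elementary bound $I_k(M) \le \binom{n}{k}I_{k-1}(M)/(n-k+1)$-type monotonicity (more simply: $I_{k}(M)\le \binom n k$ and $I_1(M)=n$ since $M$ is simple), and I expect that comparing the two forces $m$ to take a single value (presumably $m$ corresponding to the case already treated) or else produces a contradiction with $m\ge 1$. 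Concretely, $I_1(M) = n$ (simplicity) combined with the $k=1$ relation $\lambda_1 a_0 f_0 + (a_1\partial_1+\cdots+a_n\partial_n)f_1 = 0$, i.e.\ $\lambda_1 a_0 + \sum a_j = 0$, together with $\sum a_j = -\lambda_2 a_0$ and $a_0\ne 0$ gives $\lambda_1 = \lambda_2$; tracing through the definitions of $\lambda_1,\lambda_2$ in terms of the falling-factorial coefficients yields a numerical identity in $n,m,r$ that I would check fails for $m\ge 1$, giving the contradiction and hence $a_0 = 0$. This bookkeeping with the explicit constants is where the real work lies; everything else is a transcription of the earlier argument. (If instead the identity \emph{can} hold for some $m\ge 1$, the fallback is to push the substitution argument one degree further — using $f_3$, which for simple $M$ of rank $\ge 3$ is again the full elementary symmetric polynomial — to over-determine the $I_k(M)$ and force $M=U_{r,n}$, then note that for $U_{r,n}$ one checks the claimed independence directly.)
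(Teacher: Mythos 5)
Your proposal is correct and follows essentially the same route as the paper: the paper's proof uses exactly the coefficients of $x_0^{r-1}$ and $x_0^{r-2}$ in $\ell f$ (your $k=1$ and $k=2$ equations) together with Lemma \ref{lem:linearalgebra} to force $a_j=-\frac{n+m-1}{n-1}a_0$, after which the $k=1$ equation yields $-\frac{m}{n-1}a_0=0$ and hence $a_0=0$ since $m\ge 1$. The numerical identity you deferred does fail for every $m\ge 1$ exactly as you predicted (note only that the $k=2$ level gives $\sum_{j}a_j=-\frac{n}{n-1}\lambda_2 a_0$ rather than $-\lambda_2 a_0$, so the identity is $\lambda_1=\frac{n}{n-1}\lambda_2$, which still reduces to $m=0$), and none of the $I_k(M)$ bookkeeping or the fallback via $f_3$ is needed.
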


\begin{proof}
Let $\ell= \sum_{k=0}^n a_k \partial_k$ with $a_k \in \R$.
Then $\ell f$ is a polynomial of the form
{\small
$$c_0 \left\{ (n+m) a_0+ \sum_{k=1}^n a_k \right\}x_0^{r-1}
+  c_1\left\{ \sum_{k=1}^n \left( (n+m-1) a_0+ \sum_{j \ne k} a_j \right)x_k  \right\}x_0^{r-2}+ \cdots,
$$
}

\noindent
where $c_0=\frac {(n+m-1)!} {(r-1)!}$ and $c_1=\frac {(n+m-2)!} {(r-2)!}$.
Suppose $\ell f=0$.
Then we have 
(i) $(n+m)a_0 + \sum_{k=1}^n a_k=0$
and
(ii) $(n+m-1)a_0+ \sum_{j \ne k} a_j =0$ for $k=1,2,\dots,n$.
The condition (ii) tells $a_j= -\frac {n+m-1} {n-1} a_0$ for all $j$ by Lemma \ref{lem:linearalgebra},
but then condition (i) says $0=(n+m)a_0 -\frac {n(n+m-1)}{ n-1}a_0= -\frac {m}{n-1} a_0$.
Then we have $a_0= \cdots=a_n=0$, so $\partial_0f,\partial_1 f,\dots,\partial_n f$ are linearly independent. 
\end{proof}

Now we prove the main  result of this section.

\begin{theorem}
\label{thm:morphism}
Let $M$ be a simple matroid on $[n]$ of rank $r$, $N$ a matroid of rank $r'$,
and $\varphi:M \to N$ a morphism of matroids such that $\rank_N(\varphi([n]))=r'$.
Then $\partial_0 \overline P_\varphi,\partial_1 \overline P_\varphi,\dots,\partial_n \overline P_\varphi$ are $\R$-linearly dependent if and only if one of the following holds:
\begin{itemize}
\item[(A)] $r=r'$;
\item[(B)] $r-r'=1$ and $|L_\varphi|=1$;
\item[(C)] $M|_{L_\varphi}$ is a uniform matroid and $|[n]\setminus L_\varphi|=r'$.
\end{itemize}
\end{theorem}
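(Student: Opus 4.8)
The ``if'' direction is exactly the content of Example~\ref{exam:linearlydependent}(2),(3),(4), so the work is in the ``only if'' direction. Suppose $\ell=\sum_{i=0}^n a_i\partial_i$ is a nonzero linear form with $\ell\overline P_\varphi=0$, and write $\ell'=\sum_{i=1}^n a_i\partial_i$. The first step is to unwind this condition. Writing $P_\varphi=\sum_{k=r'}^{r}x_0^{\,n-k}g_k$, where $g_k$ is the generating polynomial of the matroid $([n],\calB(\varphi)_k)$ (so $g_r=f_M$ and $g_k=0$ for $k\notin\{r',\dots,r\}$), and using $\overline P_\varphi=\partial_0^{\,n-r}P_\varphi$, the equation $\ell\overline P_\varphi=0$ says exactly that $\ell P_\varphi$ has $x_0$-degree $<n-r$; comparing coefficients of powers of $x_0$ this is equivalent to the system
$$\ell' g_{r'}=0,\qquad \ell' g_k+(n-k+1)\,a_0\,g_{k-1}=0\quad(r'<k\le r).$$

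Next I would dispose of the trivial reductions. If $r=r'$ then (A) holds (indeed the system only constrains $\ell'$), so assume $r>r'$; if in addition $r\le 1$ then $r=1$ and $r'=0$, forcing $n=1$, $L_\varphi=[n]$, $M|_{L_\varphi}=M=U_{1,1}$ and $|[n]\setminus L_\varphi|=0=r'$, so (C) holds. Hence we may assume $r\ge 2$, so $M$ is a simple matroid of rank $\ge 2$. If $a_0=0$, the last equation of the system becomes $\ell' f_M=0$, which by Theorem~\ref{thm:LD}(i) forces $\ell'=0$ and so $\ell=0$, a contradiction; thus $a_0\neq 0$.

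The heart of the proof is then a dichotomy on the size of $[n]\setminus L_\varphi$. If $|[n]\setminus L_\varphi|=r'$, Lemma~\ref{lem:4.1} shows that $[n]\setminus L_\varphi$ is the unique basis of $\varphi$ of size $r'$ and that $\calB(\varphi)=\{([n]\setminus L_\varphi)\cup J:J\in\calI(M|_{L_\varphi})\}$, so $\overline P_\varphi=\overline P_{M|_{L_\varphi}}\cdot\prod_{i\notin L_\varphi}x_i$. Specializing $\ell\overline P_\varphi=0$ at $x_i=1$ for $i\notin L_\varphi$ and comparing top homogeneous components gives $\sum_{i\notin L_\varphi}a_i=0$ and $\big(\sum_{i\in L_\varphi\cup\{0\}}a_i\partial_i\big)\overline P_{M|_{L_\varphi}}=0$; feeding the second relation back into $\ell\overline P_\varphi=0$ and using that the monomials $\prod_{j\notin L_\varphi,\,j\neq i}x_j$ are pairwise distinct forces $a_i=0$ for all $i\notin L_\varphi$. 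Since $\ell\neq 0$, the induced relation on $\overline P_{M|_{L_\varphi}}$ is nontrivial, so $\partial_0\overline P_{M|_{L_\varphi}}$ and the $\partial_i\overline P_{M|_{L_\varphi}}$ ($i\in L_\varphi$) are $\R$-linearly dependent; as $M|_{L_\varphi}$ is simple (a restriction of a simple matroid has no loops or parallel elements), Theorem~\ref{thm:LD}(ii), together with the trivial cases $\rank(M|_{L_\varphi})\le 1$, shows $M|_{L_\varphi}$ is uniform, i.e.\ (C) holds.

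The remaining case $|[n]\setminus L_\varphi|>r'$ (so $r'\ge 1$), from which one must extract (B), is where I expect the main obstacle. The plan is to use the whole system above, not just its extreme equations: Lemma~\ref{lem:4.1} describes the matroids $([n],\calB(\varphi)_{r'})$ and $([n],\calB(\varphi)_{r'+1})$ --- in particular every element of $L_\varphi$ is a loop of the former --- and together with Lemma~\ref{lem:linearalgebra} this should let one solve for the $a_i$ in terms of $a_0$ on each $\varphi$-parallel class and on $L_\varphi$, just as in the proof of Theorem~\ref{thm:LD}; then comparing coefficients of carefully chosen monomials in the intermediate equations should force $r-r'=1$ and $|L_\varphi|=1$. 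An alternative would be to induct on $n$ by deleting an element $e\in[n]\setminus L_\varphi$ that is inessential for $\varphi$ (deleting it keeps the target rank $r'$), reducing to the morphism $\varphi\setminus e$ after simplification via the identities $\partial_e P_\varphi=P_{\varphi/e}$ and $P_\varphi|_{x_e=0}=P_{\varphi\setminus e}$, with Lemma~\ref{lem:QQQ} handling the residual contraction-type case. The delicate point either way is that, unlike in Theorem~\ref{thm:LD}, $g_{r'}$ and $g_{r'+1}$ are not the elementary symmetric polynomials, so the linear-algebra step has to be run through the matroid structure of the layers $([n],\calB(\varphi)_k)$ provided by \cite{EH} and Lemma~\ref{lem:4.1}.
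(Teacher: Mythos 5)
Your setup is sound: the system $\ell' g_{r'}=0$ and $\ell' g_k+(n-k+1)a_0g_{k-1}=0$ is the correct unwinding of $\ell\overline P_\varphi=0$, the reductions to $r>r'$ and $a_0\neq 0$ are right, and your treatment of the case $|[n]\setminus L_\varphi|=r'$ via the factorization $\overline P_\varphi=\overline P_{M|_{L_\varphi}}\cdot\prod_{i\notin L_\varphi}x_i$ is a clean and essentially complete route to (C) (arguably more direct than the paper's, which reaches this subcase through its general decomposition). The problem is that the remaining case $|[n]\setminus L_\varphi|>r'$, where the conclusion must be (B), is the heart of the theorem, and what you offer there is a hedged plan (``should let one solve'', ``should force'') rather than an argument. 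Two substantive ideas are missing. First, to get $a_j=0$ for all $j\notin L_\varphi$ the paper does not simply run Lemma~\ref{lem:linearalgebra} over $\varphi$-parallel classes: it proves by descending induction that $\sum_{j\in[n]\setminus F}a_j=-(n-r')a_0$ for every flat $F$ of $M$ satisfying $\rank(\varphi(F))=\rank(F)$. The induction step splits the minimal flats above $F$ into those on which $\varphi$ raises rank and those on which it does not, and shows the union of the former (minus $F$) is a nonempty union of $\varphi$-parallel classes, whose $a$-sum vanishes because $(a_1\partial_1+\cdots+a_n\partial_n)f_{M'}=0$ for $M'=([n],\calB(\varphi)_{r'})$. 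Your sketch identifies neither the right class of flats nor this decomposition, and without them it is not clear the ``carefully chosen monomials'' exist.

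Second, even once $\ell=a_0\partial_0+\sum_{i\in L_\varphi}a_i\partial_i$ is known, you still have to extract (B). The paper writes $P_\varphi=\sum_I x_0^{\,n-|I|-|L_\varphi|}\,P_{N_I}\prod_{i\in I}x_i$, summing over bases $I\in\calB(\varphi)$ with $I\subset[n]\setminus L_\varphi$, where $N_I$ records which subsets of $L_\varphi$ extend $I$; the relation then descends to each factor $x_0^{\,m}P_{N_I}$, and Lemma~\ref{lem:QQQ} together with Theorem~\ref{thm:LD}(ii), applied via Lemma~\ref{lem:4.1} to $N_I=M|_{L_\varphi}$ for $|I|=r'$, forces either condition (C) or $\rank(M|_{L_\varphi})\le 1$. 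In the latter case $L_\varphi=\{j_0\}$ and the identity $a_0^2\partial_0^2\overline P_\varphi=a_{j_0}^2\partial_{j_0}^2\overline P_\varphi=0$ shows $\overline P_\varphi$ has no monomial divisible by $x_0^2$, whence $r-r'\le 1$ and (B) holds. None of these steps appears in your proposal, so the proof of the hardest case is not yet there; your alternative suggestion of inducting on $n$ by deletion is even more speculative and would still need an analogue of the flat induction to control the base cases.
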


\begin{proof}
Let $\ell=\sum_{k=0}^n a_k \partial_k$ be non-zero, where $a_0,\dots,a_n \in \R$,
and assume $\ell \overline P_\varphi=0$.
We prove that $\varphi$ satisfies one of (A), (B) and (C).
To prove this, we may assume $r>r'$.
Recall that
\begin{align}
\label{ttt}
\overline P_\varphi = \sum_{I \in \calB(\varphi)} \frac {(n-|I|)!} {(r-|I|)!} x_0^{r-|I|} \left ( \prod_{i \in I} x_i \right).
\end{align}
We first prove the next claim.
\medskip

\noindent
\underline{\textbf{Claim 1.}}
\begin{itemize}
\item[(I)] $a_0 \ne 0$.
\item[(II)] If $E \subset [n]$ is a $\varphi$-parallel class, then $\sum_{i \in E} a_i=0$.
\item[(III)]
For any flat $F$ of $M$  such that $\rank(\varphi(F))=\rank(F)$, one has
$\sum_{[n] \setminus F} a_i=-(n-r')a_0$.
\item[(IV)]
For any $j \in [n] \setminus L_\varphi$, we have $a_j=0$.
\end{itemize}

\begin{proof}[Proof of Claim]
(I) This follows from Theorem \ref{thm:LD}(i)
since $\overline P_\varphi= (n-r)! f_M + x_0 g$ for some polynomial $g \ne 0$.

(II)
Recall that $M'=([n],\calB({\varphi})_{r'})$ is a matroid on $[n]$.
Clearly $X \in \calB(\varphi)_{r'}$ if and only if $\rank(\varphi(X))=r'$ for any $X \subset [n]$. From this fact, it is easy to see that, for any $X \subset [n]$, the rank of $X$ in $M'$ equals to the rank of $\varphi(X)$ in $N$. In particular,
$i,j \in [n]$ are parallel in $M'$ if and only if they are $\varphi$-parallel.
Since $P_\varphi$ can be written in the form $P_\varphi=x_0^{n-r'} f_{M'} + h$, where $h$ is a polynomial that contains no monomial divisible by $x_0^{n-r'}$,
$\ell \overline P_\varphi$ can be written as
$$\ell \overline P_\varphi= \frac {(n-r')!}{(r-r')!} x_0^{r-r'} (a_1 \partial_1+ \cdots+a_n \partial_n) f_{M'} + h'$$
for some polynomial $h'$ containing no monomials divisible by $x_0^{r-r'}$.
Since $\ell \overline P_\varphi =0$, we have $(a_1 \partial_1+ \cdots+a_n \partial_n) f_{M'}=0$.
Then by Lemma \ref{lem:basicproperty}(iii) and Theorem \ref{thm:LD}(i) it follows that
$a_1 \partial_1+ \cdots + a_n \partial_n$ belongs to
$$
\mathrm{span}_\R \big\{ \{ \partial_i\mid \mbox{$i$ is an $\varphi$-loop}\} \cup \{\partial_i- \partial_j\mid \mbox{$i$ and $j$ are $\varphi$-parallel}\} \big\}.$$
This guarantees the desired property.

(III) The proof is similar to that of Theorem \ref{thm:LD}(i).
Suppose that $F$ has rank $r'$.
Let $I$ be an independent set of $M$ such that $\langle I \rangle =F$.
Note that $|I|= \rank(F)=r'$.
A routine computation tells that
the coefficient of $x_0^{r-r'-1} \prod_{i \in I} x_i$ in $\ell \overline P_\varphi$ is
\begin{align*}
&\frac {(n-r')!} {(r-r')!} (r-r')a_0
+\frac {(n-r'-1)!} {(r-r'-1)!} \left( \sum_{\{ j \} \cup I \in \calI(M),\ j \not \in I} a_j \right)\\
&=
\frac {(n-r'-1)!} {(r-r'-1)!} \left\{
(n-r')a_0+ \sum_{j \in [n] \setminus F} a_j \right\}
\end{align*}
(see also \eqref{ttt}). Since $\ell \overline P_\varphi=0$,
this proves the desired equation for $F$.

Now suppose that $F$ has rank $<r'$ and (III) holds for all flats $G \supsetneq F$ of $M$
with $\mathrm{rank} (\varphi(G))=\rank(G)$.
If $G$ is a smallest flat of $M$ that properly contains $F$, then
$$ \rank(\varphi(G)) -\rank(\varphi(F)) \leq \rank G - \rank F =1,$$
so the rank of $\varphi(G)$ must be either $\rank(F)+1$ or $\rank (F)$.
Let $G_1,\dots,G_p,G_1',\dots,G_q'$ be the minimal flats of $M$ that property contain $F$,
where $\mathrm{rank}(\varphi(G_k))= \mathrm{rank}(F)+1$ and $\mathrm{rank}(\varphi(G'_k))= \mathrm{rank} (F)$.
By Lemma \ref{lem:flat}, we have
$$[n] \setminus F= \bigsqcup_{t=1}^p (G_t \setminus F) \sqcup \bigsqcup_{s=1}^q (G_s'\setminus F).$$
We claim

\medskip

\noindent
\underline{\textbf{Claim 2.}}
$\bigsqcup_{t=1}^p (G_t \setminus F)$ is non-empty and a union of $\varphi$-parallel classes.

\begin{proof}[Proof of Claim 2]
Note that the definition of $G_1,\dots,G_p$ says that
$k \in \bigsqcup_{t=1}^p (G_t\setminus F)$ if and only if $\mathrm{rank}( \varphi(\{k\} \cup F))=\rank(F)+1$.
This in particular tells that $\bigsqcup_{t=1}^p (G_t \setminus F)$ is non-empty and contains no $\varphi$-loops.
If $i$ and $j$ are $\varphi$-parallel and $i \in \bigsqcup_{t=1}^p (G_t\setminus F)$ then we have
$$\mathrm{rank}( \varphi(\{j\} \cup F))=\rank (\varphi( \{i\} \cup F))= \rank (\varphi(F))+1,$$
which tells that $j \in \sqcup_{t=1}^p (G_t\setminus F)$.
This guarantees the desired property.
\end{proof}

Now, by statement (II), we have $\sum_{j \in \bigsqcup_{t=1}^p (G_t \setminus F)} a_j=0$.
Then
\begin{align*}
p \cdot \left( \sum_{j \in [n] \setminus F} a_j \right) & = \sum_{k=1}^p \left\{ \sum_{j \in [n] \setminus G_k} a_j + \sum_{j \in G_k \setminus F} a_j \right\}\\
&= \sum_{k=1}^p \left( \sum_{j \in [n] \setminus G_k} a_j \right)\\
&= p \times (n-r')a_0,
\end{align*}
which proves the desired property,
where we use the induction hypothesis to the third equality.

(IV)
If $|[n]\setminus L_\varphi|\leq 1$, then the assertion follows from the statement (II).
We assume  $|[n]\setminus L_\varphi|\geq 2$.
Let $\alpha=(n-r')a_0+\sum_{i \in L_\varphi} a_j$.
The statement (III) for rank $1$ flats tells that for any $k \in [n] \setminus L_\varphi$, we have
$\sum_{j \ne k} a_j =-(n-r')a_0$, equivalently,
$\sum_{j \in [n] \setminus L_\varphi,\ j \ne k} a_j=-\alpha$.
Then Lemma \ref{lem:linearalgebra} tells $a_j=-\frac {1} {|[n] \setminus L_\varphi|-1} \alpha$ for all $j \in [n] \setminus L_\varphi.$
Moreover, (II) tells, for any $j \in [n] \setminus L_\varphi$,
we have $0=\sum_{i \mbox{ \tiny is $\varphi$-parallel to $j$}} a_i=c \alpha$ for some $c<0$, so $\alpha=0$.
These prove the desired statement.
\end{proof}

We now go back to the proof of Theorem \ref{thm:morphism}.
By Claim 1, we have
$$ \textstyle \ell=a_0\partial_0+ \sum_{i \in L_\varphi} a_i \partial_i.$$
For each $I \in \calB(\varphi)$ with $I \subset [n] \setminus L_\varphi$,
let
$$N_I=\{ J \subset L_\varphi \mid J \cup I \in \calB(\varphi)\}.$$
Note that $N_I$ is the set of independent sets of the simple matroid obtained from $M$ by contracting elements in $I$ and then restrict it to $L_\varphi$.
Also,
$$P_\varphi= \sum_{I \in \calB(\varphi),\ I \subset [n] \setminus L_\varphi} x_0^{n-|I|-|L_\varphi|} \cdot P_{N_I} \cdot \left(\prod_{i \in I} x_i \right).$$
Then, since
$$\ell \overline P_\varphi=\ell \partial_0^{n-r} P_\varphi= \sum_{I \in \calB(\varphi),\ I \subset [n] \setminus L_\varphi} \left\{ \ell \partial_0^{n-r} \cdot \left( x_0^{n-|I|-|L_\varphi|} P_{N_I}\right) \right\}\left(\prod_{i \in I} x_i \right),$$
we have
$$\ell \partial_0^{n-r} \left( x_0^{n-|I|-|L_\varphi|} P_{N_I}\right)=0$$
for all $I \in \calB(\varphi)$ with $I \subset [n] \setminus L_\varphi$.
Also, by Lemma \ref{lem:4.1}, $N_I=M|_{L_\varphi}$ for all $I \in \calB(\varphi)$ with $|I|=r'$.
Then by Theorem \ref{thm:LD}(ii) and Lemma \ref{lem:QQQ}, we have either
$$ (\clubsuit)\ \mathrm{rank}(M|_{L_\varphi})\leq 1
\ \ \mbox{ or } \ \ (\spadesuit)\ \mbox{ $M|_{L_\varphi}$ is a uniform matroid and $n-r'-|L_\varphi|=0$}.$$

The latter case is nothing but the condition (C).
Suppose $\mathrm{rank}(M|_{L_\varphi})\leq 1$.
Then $L_\varphi = \emptyset$ or $|L_{\varphi}|=1$.
The former case cannot occur since $L_\varphi=\emptyset$ implies $\ell=a_0 \partial_0$ and the assumption $r>r'$ tells that $\overline P_\varphi$ contains a monomial divisible by $x_0$.
Suppose $L_\varphi=\{j_0\}$ for some $j_0 \in [n]$.
Then $\ell=a_0\partial_0+a_{j_0} \partial_{j_0}$.
Since $a_0 \ne 0$ and $a_0\partial_0 \overline P_\varphi=-a_{j_0}\partial_{j_0} \overline P_\varphi + \ell \overline P_\varphi=-a_{j_0}\partial_{j_0} \overline P_\varphi$,
we have
$$
a_0^2 \partial_0^2 \overline P_\varphi=
a_{j_0}^2 \partial_{j_0}^2 \overline P_\varphi=0.
$$
(Recall that $\overline P_\varphi$ contains no monomials which is divisible by $x_k^2$ for any $k \in [n]$.)
This tells that $\overline P_\varphi$ contains no monomial which is divisible by $x_0^2$.
This happens only when $r-r' \leq 1$.
Hence we have $|L_\varphi|=1$ and $r-r'=1$, so condition (B) is satisfied.
%
\end{proof}


Using the Lorentzian property of $P_\varphi$,
Eur--Huh \cite{EH} proved 
\[\fr<|\calB(\vphi)_{k-1}|/\binom{n}{k-1}>\fr<|\calB(\vphi)_{k+1}|/\binom{n}{k+1}>\leq \left(\fr<|\calB(\vphi)_{k}|/\binom{n}{k}>\right)^2 \ \ \ (r'<k<r).\]
Considering Corollary \ref{cor:main}, it is natural to ask

\begin{question}\label{q:question}
When equality holds in the above inequality? 
\end{question}

In the proof of Corollary \ref{cor:main},
we use the property that $\overline P_M$ has the $\HRR_1$ w.r.t.\ $\partial_1+ \cdots+\partial_n$.
We close this paper with an example showing that this is not the case for morphisms of matroids.

\begin{example}
Let $\vphi : U_{3, 3}\to U_{1,1}$ be a (unique) natural morphism. Then, 
\[\overline{P}_{\vphi}=x_1x_2x_3+x_0(x_1x_2+x_1x_3+x_2x_3)+x_0^2(x_1+x_2+x_3)\]
and a routine computation tells that $\overline{P}_{\vphi}$ does not have the $\SLP_1$ w.r.t.\ $\partial_1+\partial_2+ \partial_3$.
\end{example}



\end{document}